\definecolor{orange}{rgb}{1.0, 0.55, 0.0}
\theoremstyle{definition}
\newtheorem{definition}{Definition}[section]
\newtheorem{remark}[definition]{Remark}
\newtheorem{beispiel}[definition]{Example}
\newenvironment{example}{\begin{beispiel}%
  \pushQED{\qed}}%
  {\popQED\end{beispiel}}
\theoremstyle{plain}
\newtheorem{theorem}[definition]{Theorem}
\newtheorem{lemma}[definition]{Lemma}
\newtheorem{proposition}[definition]{Proposition}
\newtheorem{corollary}[definition]{Corollary}
\numberwithin{equation}{section}
\DeclareMathOperator{\hdim}{hdim}
\DeclareMathOperator{\hspec}{hspec}
\DeclareMathOperator{\Aut}{Aut}
\DeclareMathOperator{\Inn}{Inn}
\newcommand{\F}{\mathbb{F}}
\newcommand{\Z}{\mathbb{Z}}
\newcommand{\Q}{\mathbb{Q}}
\newcommand{\N}{\mathbb{N}}
\newcommand{\R}{\mathbb{R}}
\newcommand{\aug}{\mathfrak{a}}
\begin{document}

\title[The lower $p$-series of analytic pro\nobreakdash-$p$
groups]{The lower $p$-series of analytic pro\nobreakdash-$p$ groups \\
  and Hausdorff dimension}

\author[I. de las Heras]{Iker de las Heras} 
\address{Iker de las Heras: Department of Mathematics, University of the Basque Country UPV/EHU, 48940 Leioa, Spain}
\email{iker.delasheras@ehu.eus}

\author[B. Klopsch]{Benjamin Klopsch} \address{Benjamin Klopsch:
  Heinrich-Heine-Universit\"at D\"usseldorf,
  Mathematisch-Naturwissenschafltiche Fakult\"at, Mathematisches Institut} \email{klopsch@math.uni-duesseldorf.de}

\author[A. Thillaisundaram]{Anitha Thillaisundaram} 
\address{Anitha Thillaisundaram: Centre for Mathematical Sciences, Lund University, 223 62 Lund, Sweden}
\email{anitha.thillaisundaram@math.lu.se}


\thanks{\textit{Funding acknowledgements.} The first author was  supported by the Spanish Government, grant MTM2017-86802-P, partly  with FEDER funds, and by the Basque Government, grant IT974-16; he  received funding from the European Union's Horizon 2021 research and  innovation programme under the Marie Sklodowska-Curie grant  agreement, project 101067088. The second author received support  from the Knut and Alice Wallenberg Foundation.  The third author  acknowledges support from EPSRC, grant EP/T005068/1 and from the  Folke Lann\'{e}r's Fund.  The research was partially conducted in  the framework of the DFG-funded research training group GRK 2240: Algebro-Geometric Methods in Algebra, Arithmetic and Topology.}
    
\keywords{Lower $p$-series, $p$-adic analytic groups, Hausdorff  dimension, Hausdorff spectrum}

\subjclass[2020]{Primary 20E18; Secondary 22E20, 28A78}


\begin{abstract} 
  Let $G$ be a $p$-adic analytic pro\nobreakdash-$p$ group of
  dimension~$d$, with lower $p$-series
  $\mathcal{L} \colon P_i(G), \,i \in \N$.  We produce an approximate
  series which descends regularly in strata and whose terms deviate
  from $P_i(G)$ in a uniformly bounded way.  This brings to light a
  new set of rational invariants
  $\xi_1, \ldots, \xi_d \in [\nicefrac{1}{d},1]$, canonically
  associated to~$G$, such that
  \[
    \sup_{i \in \N} \; \Bigl\lvert \log_p \lvert G : P_i(G) \rvert -
    \Big( \sum_{k=1}^d \xi_k \Big) (i-1) \Bigr\rvert < \infty
  \]
  and such that the Hausdorff dimensions $\hdim_G^\mathcal{L}(H)$ of
  closed subgroups $H \subseteq_\mathrm{c} G$ with
  respect to $\mathcal{L}$ are confined to the set
  \[
  \Big\{ \Big(\sum_{k=1}^d \varepsilon_k \xi_k \Big) /
  \Big( \sum_{k=1}^d \xi_k \Big) \mid \varepsilon_1, \ldots,
  \varepsilon_d \in \{0,1\} \Big\}.
  \]
  In particular, the Hausdorff spectrum $\hspec^\mathcal{L}(G)$ is
  discrete and consists of at most $2^d$ rational numbers.
\end{abstract}

\maketitle


\section{Introduction}

 Throughout $p$ denotes a prime, and $\Z_p$ is the ring
of $p$-adic integers.  We are interested in the lower $p$-series of
$p$-adic analytic pro\nobreakdash-$p$ groups and questions about
Hausdorff dimension with respect to the lower $p$-series. 

\smallskip

Let $G$ be a finitely generated pro\nobreakdash-$p$ group.  The
\emph{lower $p$-series} of $G$ -- sometimes called the lower
$p$-central series -- is defined recursively as follows:
\begin{align*}
  P_1(G) = G,  %
  & \quad \text{and} \quad  P_i(G) = \overline{P_{i-1}(G)^p
    \, [P_{i-1}(G),G]} \quad
    \text{for $i\in \N$ with $i \ge 2$,}  
\end{align*}
where we adopt the notation from \cite[Sec.~1.2]{DDMS99}. It is the
fastest descending central series of $G$ with elementary abelian
sections and it consists of open normal subgroups forming a base of
neighbourhoods for the element~$1$.  The lower $p$-series plays a
significant role in the study of pro\nobreakdash-$p$ groups.  For
instance, $\bigoplus_{i=1}^\infty P_i(G)/P_{i+1}(G)$ naturally carries
the structure of an $\F_p$-Lie algebra and can frequently be used to
analyse the structure of $G$ or its finite quotients; compare with
\cite[Chap.~VIII]{HuBl82}, \cite[Chap.~II]{La65} and
\cite[Sec.~5]{La85}.

In order to understand the lower $p$-series of $p$-adic analytic
pro\nobreakdash-$p$ groups, it is useful to consider an analogous
series for $\Z_p$-lattices furnished with a group action.  Let $L$ be
a $\Z_p$-lattice equipped with a (continuous) right $G$-action, in
other words, let $L$ be a right $\Z_pG$-module that is finitely
generated and torsion-free as a $\Z_p$-module.  The two-sided ideal
\[
  \aug_G = \sum_{g\in G} (g-1) \Z_pG +
  p\Z_pG \,\trianglelefteq\, \Z_pG
\]
is the kernel of the natural epimorphism from $\Z_pG$ onto the finite
field $\F_p$ that sends each group element to~$1$; it is convenient to
refer to $\aug_G$, somewhat informally, as the \emph{$p$-augmentation
  ideal} of the group ring $\Z_pG$.  The powers $\aug_G^{\, i}$,
$i \in \N_0$, of the $p$\nobreakdash-augmentation ideal intersect in
$\{0\}$ and induce a topology on $\Z_pG$ which gives rise to the
completed group ring $\Z_p[\![G]\!]$; see \cite[Sec.~7.1]{DDMS99}.
The descending series of open $\Z_pG$-submodules
\begin{equation} \label{equ:def-lower-p-of-L} \lambda_i(L) =
  L . \aug_G^{\, i} \quad \text{for $i \in \N_0$,}
\end{equation}
is the fastest descending $G$-central series of $L$ with elementary
abelian sections.  Accordingly we call the series
\eqref{equ:def-lower-p-of-L} the \emph{lower $p$-series} of the
$\Z_pG$-module $L$, taking note of the notational shift in the index
in comparison to the lower $p$\nobreakdash-series of a group.  As $G$
is a pro\nobreakdash-$p$ group, it acts unipotently on every principal
congruence quotient $L/p^j L$, $j \in \N$, and this implies
that $\bigcap_{i \in \N_0} \lambda_i(L) = \{0\}$.

In general, a \emph{filtration series} of $L$, regarded as a
$\Z_p$-lattice, is a descending chain
$\mathcal{S} \colon L = L_0 \supseteq L_1 \supseteq
  \ldots$ of open $\Z_p$-sublattices
$L_i \subseteq_\mathrm{o} L$ such that
$\bigcap_{i \in \N_0} L_i = \{0\}$.  For $c \in \N_0$, we say that two
filtration series
$\mathcal{S} \colon L = L_0 \supseteq L_1 \supseteq
  \ldots$ and
$\mathcal{S}^* \colon L = L_0^* \supseteq L_1^*
\supseteq \ldots$ are
\emph{$c$-equivalent} if for all $i \in \N$,
\[
  p^c L_i \subseteq L_i^* \quad \text{and} \quad p^c L_i^* \subseteq
  L_i.
\]
We say that $\mathcal{S}$ and $\mathcal{S}^*$ are \emph{equivalent} if
they are $c$-equivalent for some $c \in \N_0$.  This is but
one of several ways to establish the concept or a closely
related one.  For instance, if
$\log_p \lvert L_i+L_i^* : L_i \cap L_i^* \rvert \le c$ for all
$i \in \N$, then $\mathcal{S}$ and $\mathcal{S}^*$ are
$c$-equivalent; conversely, if $\mathcal{S}$ and $\mathcal{S}^*$ are
$c$-equivalent, then
$\log_p \lvert L_i+L_i^* : L_i \cap L_i^* \rvert \le 2c$ for all
$i \in \N$.  Thus $\mathcal{S}$ and $\mathcal{S}^*$ are
equivalent if and only if $\lvert L_i+L_i^* : L_i \cap L_i^* \rvert$
is uniformly bounded as $i$ ranges over all positive integers.

Our aim is to show that the lower $p$-series of $L$ is equivalent to a
filtration series that is descending very regularly, in a sense that
we make precise now.  Let $d = \dim_{\Z_p} L$ denote the dimension of
the $\Z_p$-lattice $L$, and let $x_1, \ldots, x_d$ be a $\Z_p$-basis
for $L$.  We say that a filtration series
$\mathcal{S} \colon L = L_0 \supseteq L_1
  \supseteq \ldots$ of $L$ has a (vertical)
  \emph{stratification} given by the \emph{frame} $(x_1, \ldots, x_d)$
  if there are positive real parameters $\xi_1, \ldots, \xi_d$ with
  $\xi_1 \le \ldots \le \xi_d$ such that $\mathcal{S}$ is equivalent
  to the filtration series
\[
  \mathcal{S}^* \colon L_i^* = \bigoplus_{k=1}^d p^{\lfloor i
    \xi_k\rfloor} \Z_p \, x_k;
\]
in this situation, $(\xi_1,\ldots,\xi_d)$ is uniquely determined by
the frame and we refer to this $d$-tuple as the \emph{growth rate} of
the stratification.  Informally, we call $\xi_1,\ldots,\xi_d$ the
`growth rates' of the stratification.  We say that the stratification
$\mathcal{S}^*$ is \emph{given by $G$-invariant sublattices} if all
terms $L_i^*$ are $G$-invariant.

\begin{theorem} \label{thm:stratification-exists} Let $G$ be a
  finitely generated pro\nobreakdash-$p$ group, and let $L$ be a
  $\Z_p$-lattice equipped with a $G$-action.  Then the lower
  $p$-series of the $\Z_pG$-module $L$ admits a stratification, and
  any two stratifications have the same growth rate.

  Moreover, if $d = \dim_{\Z_p} L$ and if $(x_1, \ldots, x_d)$ is a
  frame for a stratification of the lower $p$-series of $L$ with
  growth rate $(\xi_1, \ldots, \xi_d)$, then
  \begin{enumerate}
  \item $\nicefrac{1}{d} \le \xi_1\le \ldots \le \xi_d \le 1$ are
    rational numbers,
  \item for every $e \in \{1,\ldots, d-1\}$ with $\xi_e < \xi_{e+1}$
    the sublattice $\bigoplus_{k=1}^e \Z_p x_k$ is
    $G$\nobreakdash-invariant, and
  \item the entire stratification is given by $G$-invariant
    sublattices.
  \end{enumerate}
\end{theorem}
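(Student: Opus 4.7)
My plan is to reduce to the case that the pro-$p$ group acting on $L$ is $p$-adic analytic, and then invoke Lazard's structure theory. Since $L\aug_G^i$ depends only on the action of $G$ on $L$, we may replace $G$ by its image $\bar G$ in $\Aut(L)\cong\GL_d(\Z_p)$, a closed pro-$p$ subgroup and hence $p$-adic analytic. Choose a uniform open normal subgroup $U\trianglelefteq_\mathrm{o}\bar G$. Because $\Z_p[\![\bar G]\!]$ is finitely generated over $\Z_p[\![U]\!]$, a standard comparison produces constants $c_1,c_2\in\N$ with $\aug_U^{c_1 i}\subseteq\aug_{\bar G}^i\subseteq\aug_U^{\lceil i/c_2\rceil}$, so the two series $L\aug_{\bar G}^i$ and $L\aug_U^i$ are equivalent up to an affine rescaling of the index, and it suffices to stratify the $\aug_U$-adic series.

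Lazard's theorem supplies the key algebraic input: for uniform $U$ of dimension $d$, the completed group algebra $\Z_p[\![U]\!]$ is a non-commutative power series ring in $b_i=u_i-1$, and the associated graded ring $\mathrm{gr}(\Z_p[\![U]\!])=\bigoplus_{n\ge 0}\aug_U^n/\aug_U^{n+1}$ is the commutative polynomial ring $\F_p[Y_0,Y_1,\ldots,Y_d]$, where $Y_0,\ldots,Y_d$ are the principal symbols of $p,b_1,\ldots,b_d$. The associated graded module $\mathrm{gr}(L)=\bigoplus_{n\ge 0}L\aug_U^n/L\aug_U^{n+1}$ is finitely generated over this Noetherian polynomial ring, so by Hilbert--Serre its Hilbert function is eventually a polynomial in $n$, which already yields the averaged asymptotics $\log_p|L:L\aug_U^n|\sim n\sum_k\xi_k$.

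The crux is to separate the individual $\xi_k$. I would construct a \emph{slope filtration} of $L$: for each $\sigma\in[0,1]$, let $N_\sigma\le L$ consist of those $v\in L$ whose image in $L/L\aug_U^n$ has $p$-order at most $n\sigma+O(1)$ uniformly in $n$. Each $N_\sigma$ is a saturated $G$-invariant sublattice, and $\sigma\mapsto N_\sigma$ is a step function with finitely many jumps at rationals $\eta^{(1)}<\cdots<\eta^{(s)}$ in $[\nicefrac{1}{d},1]$. Rationality and the location of jumps are pinned down by the associated primes and primary decomposition of $\mathrm{gr}(L)$; the lower bound $\nicefrac{1}{d}$ follows inductively via Nakayama's lemma, which forces each one-slope subquotient of rank $e$ to have average slope at least $\nicefrac{1}{e}\ge\nicefrac{1}{d}$. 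Uniqueness of the growth rate is immediate, since the invariant factors of $L/L\aug_U^n$ are intrinsic.

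To conclude, choose a $\Z_p$-basis $(x_1,\ldots,x_d)$ of $L$ adapted to the $G$-invariant chain $0=N_0<N_1<\cdots<N_s=L$, ordered by increasing slope so that each initial segment $\bigoplus_{k\le\mathrm{rank}(N_j)}\Z_p x_k$ equals $N_j$ and the corresponding parameters $\xi_k$ all equal $\eta^{(j)}$ within the $j$-th stratum; this builds in (i) and (ii). Property (iii) follows automatically from the strict increase of slopes across strata: if $g\in G$ sends $x_k$ (at slope $\eta^{(j)}$) to $x_k g$, any component in a lower stratum $N_{j'-1}$ with $j'<j$, once multiplied by $p^{\lfloor i\eta^{(j)}\rfloor}$, is absorbed by the larger exponent $p^{\lfloor i\eta^{(j')}\rfloor}$ appearing in $L_i^*$, so $L_i^*$ is $G$-invariant. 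The main obstacle I anticipate is establishing the uniform $O(1)$ bound (not merely $o(n)$) in the definition of $N_\sigma$: the Hilbert polynomial only controls sums of invariant factors asymptotically, and translating this into per-direction uniformity demands genuine use of the module structure of $\mathrm{gr}(L)$ over $\F_p[Y_0,\ldots,Y_d]$ to transfer information from the graded pieces back to the filtered lattice $L$ itself.
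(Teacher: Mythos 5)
Your reduction to a uniform open normal subgroup $U \trianglelefteq_\mathrm{o} \bar G$ is not sound, and this is a genuine gap rather than a detail to be fixed.  While $L.\aug_U^{\,i} \subseteq L.\aug_{\bar G}^{\,i}$ holds trivially, and one can show $L.\aug_{\bar G}^{\,ci} \subseteq L.\aug_U^{\,i}$ for some constant $c$ (using that the mod-$p$ augmentation ideal of $\F_p[\bar G/U]$ is nilpotent, so $\aug_{\bar G}^{\,c}\subseteq \Z_p[\![\bar G]\!]\aug_U$), this only squeezes the two filtrations within a \emph{multiplicative} factor $c$ in the index, not within a bounded additive shift, and your claimed containment $\aug_{\bar G}^{\,i}\subseteq\aug_U^{\,\lceil i/c_2\rceil}$ is false as stated ($\aug_U$ is not an ideal of $\Z_p[\![\bar G]\!]$).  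More importantly, the growth rates genuinely change across this reduction.  Take $p\ge 3$, $L=\Z_p\oplus\Z_p[\zeta_p]$, and $G$ acting through $\bar G\cong\Z/p$ trivially on the first summand and by a primitive $p$-th root of unity on the second.  Then $L.\aug_{\bar G}^{\,i}=p^i\Z_p\oplus(\zeta_p-1)^i\Z_p[\zeta_p]$ has growth rate $(\tfrac{1}{p-1},\ldots,\tfrac{1}{p-1},1)$, whereas the only uniform open subgroup of $\bar G$ is trivial, $L.\aug_U^{\,i}=p^iL$, with growth rate $(1,\ldots,1)$.  No single index rescaling relates the two, because the two direct summands would need different scaling factors; so stratifying the $\aug_U$-adic series does not yield the stratification you are asked to produce.

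Even setting that aside, you correctly identify the second gap yourself: Hilbert--Serre for $\gr(L)$ over $\gr\Z_p[\![U]\!]\cong\F_p[Y_0,\ldots,Y_d]$ only controls the dimensions $\dim_{\F_p}(L.\aug^n/L.\aug^{n+1})$ asymptotically, which gives the averaged quantity $\sum_k\xi_k$ but not the individual elementary divisors of $L/L.\aug^n$ with a uniform $O(1)$ error; that missing step is the entire content of the theorem.  Your argument for $\xi_k\ge 1/d$ via Nakayama is also circular, since ``each one-slope subquotient of rank $e$ has average slope $\ge 1/e$'' presupposes that strata are already isoclinic.  The paper's proof takes a completely different route: no reduction to uniform $U$, no associated graded ring.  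Instead it proceeds by induction on $\dim_{\Z_p}L$, using the notion of rigidity (Prop.~\ref{pro:rigid}) to settle the case where $\Q_p\otimes_{\Z_p}L$ is a simple $\Q_pG$-module (Prop.~\ref{pro:simple}), then peeling off a simple submodule $N$, replacing $L$ by a commensurable lattice $M\oplus N$ (Lemma~\ref{lem:commensurable-OK}), and handling the case where no $G$-invariant complement of $N$ exists by an explicit three-claim quantitative bookkeeping argument.  Statements (ii) and (iii) come from Prop.~\ref{pro:stratification-restricts-to-small-xi} and Cor.~\ref{cor:stratif-by-G-inv-lattices}, which compare slopes directly on the lattice rather than reading them off a graded module.
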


The proof of Theorem~\ref{thm:stratification-exists} is
  relatively straightforward in the special case where
  $\Q_p \otimes_{\Z_p} L$ is a completely reducible $\Q_p G$-module;
  see Remark~\ref{rem:completely-reducible}.  But the general setting
  requires additional care.  In essence, our analysis gives rise to an
  inductive procedure for computing the growth rate parameters; see
  Remark~\ref{rem:how-to-determine-xis}.

By means of Lie-theoretic considerations,
  Theorem~\ref{thm:stratification-exists} yields an analogous
description of the lower $p$-series of a $p$\nobreakdash-adic analytic
pro\nobreakdash-$p$ group.  We recall that, for every $p$-adic
analytic pro\nobreakdash-$p$ group $G$, there exists $j \in \N$ such
that for every $i \in \N$ with $i \ge j$ the group $P_i(G)$ is
uniformly powerful; see \cite[Prop.~3.9 and Thm.~4.2]{DDMS99}.
Moreover, every minimal generating system $x_1, \ldots, x_d$ of a
uniformly powerful pro\nobreakdash-$p$ group $U$ yields a product
decomposition of
$U = \overline{\langle x_1 \rangle} \cdots \overline{\langle x_d
  \rangle}$ into $d$ procyclic subgroups, which in turn gives a system
of $p$-adic coordinates $U \to \Z_p^{\, d}$,
$x_1^{\, a_1} \cdots x_d^{\, a_d} \mapsto (a_1,\ldots,a_d)$ of `the
second kind'; see \cite[Thm.~4.9]{DDMS99}.  The dimension of $G$,
denoted by $\dim(G)$, coincides with the rank of any uniformly
powerful open subgroup of~$G$.

\begin{theorem} \label{thm:p-series-of-p-adic-group} Let $G$ be a
  $p$-adic analytic pro\nobreakdash-$p$ group of dimension~$d$.  Then
  there exists $j \in \N$ such that
  \begin{enumerate}
  \item the group $U=P_j(G) \trianglelefteq_\mathrm{o} G$ is uniformly
    powerful and
  \item there is a minimal generating system
    $x_1,\ldots, x_d$ for $U$ and there are rational
    numbers $\xi_1,\ldots,\xi_d$ with
    $\nicefrac{1}{d} \le \xi_1 \le \ldots \le \xi_d \le 1$ and
    $c \in\N_0$ such that the following
    holds: for
    all $i \in \N$ with $i \ge \max\{j+2cd,6cd\}$,
   the following products  of  procyclic
      subgroups
    \begin{align*}
      \check{U}_i %
      & = \overline{\langle x_1^{\, p^{\lfloor (i -
        j)\xi_1\rfloor+c}}\rangle} \,\cdots\, \overline{\langle
        x_d^{\, p^{\lfloor (i-j) \xi_d\rfloor+c}} \rangle} \\
      \hat{U}_i %
      &= \overline{\langle x_1^{\, p^{\lfloor
        (i-j)\xi_1\rfloor-c}} \rangle} \,\cdots\,
        \overline{\langle x_d^{\, p^{\lfloor (i-j)\xi_d\rfloor -c}}\rangle}
    \end{align*}
    form open normal subgroups
    $\check{U}_i, \hat{U}_i \trianglelefteq_\mathrm{o} G$ such that
    \[
      \check{U}_i \subseteq P_i(G) \subseteq \hat{U}_i
    \]
    and the factor group $\hat{U}_i / \check{U}_i$ is homocyclic
    abelian of exponent~$p^{2c}$.
  \end{enumerate}
\end{theorem}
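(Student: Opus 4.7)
The plan is to reduce Theorem~\ref{thm:p-series-of-p-adic-group} to Theorem~\ref{thm:stratification-exists} via the Lie correspondence for uniformly powerful pro\nobreakdash-$p$ groups. By \cite[Prop.~3.9, Thm.~4.2]{DDMS99} there is $j \in \N$ such that $U = P_j(G) \trianglelefteq_\mathrm{o} G$ is uniformly powerful of rank $d = \dim(G)$. Via the logarithm, one identifies $U$ with a $\Z_p$-Lie lattice $L = \log U$ of dimension $d$, on which the conjugation action of $G$ induces the structure of a right $\Z_pG$-module.

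The key step, and main technical effort, is to establish a uniform $c_0 \in \N_0$ for which the filtration series $(\log P_{j+i}(G))_{i \in \N_0}$ and $(\lambda_i(L))_{i \in \N_0}$ on $L$ are $c_0$-equivalent. At first order, $\log P_{j+1}(G) = \log \overline{U^p[U,G]}$ equals $pL + L(G-1)\Z_pG = L \aug_G = \lambda_1(L)$ modulo Baker--Campbell--Hausdorff corrections of strictly greater $p$-depth; an induction on $i$, absorbing the accumulating BCH perturbations into a uniformly bounded $p$-power factor, yields the claimed equivalence.

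Applying Theorem~\ref{thm:stratification-exists} to $L$ furnishes a $\Z_p$-basis $y_1, \ldots, y_d$ of $L$, rational growth rates $\nicefrac{1}{d} \le \xi_1 \le \ldots \le \xi_d \le 1$, and some $c_1 \in \N_0$ such that the $G$-invariant sublattices $L_i^* = \bigoplus_{k=1}^d p^{\lfloor i \xi_k \rfloor} \Z_p y_k$ are $c_1$-equivalent to $\lambda_i(L)$. Setting $x_k = \exp(y_k) \in U$ yields a minimal generating system of $U$, and via coordinates of the second kind (\cite[Thm.~4.9]{DDMS99}) the $G$-invariant lattices $p^c L_{i-j}^*$ and $p^{-c} L_{i-j}^*$, with $c = c_0 + c_1$, correspond to the product subgroups $\check{U}_i$ and $\hat{U}_i$ of the statement; part~(iii) of Theorem~\ref{thm:stratification-exists} ensures these are normal in $G$. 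Combining with the previous paragraph delivers $\check{U}_i \subseteq P_i(G) \subseteq \hat{U}_i$, and the quotient $\hat{U}_i / \check{U}_i \cong (\Z / p^{2c}\Z)^d$ is homocyclic abelian of exponent $p^{2c}$. The threshold $i \ge \max\{j+2cd, 6cd\}$ ensures $\lfloor (i-j)\xi_k \rfloor - c \ge 0$ for all $k$ (using $\xi_k \ge \nicefrac{1}{d}$), so the exponents defining $\hat{U}_i$ are non-negative, while the bound $6cd$ provides the additional $p$-depth needed for the BCH-based identifications to be honest group-theoretic identities. The principal obstacle is the uniform BCH control underpinning the $c_0$-equivalence claim: showing that the discrepancy between $\log P_{j+i}(G)$ and $\lambda_i(L)$ does not grow with $i$.
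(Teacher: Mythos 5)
The overall strategy—pass to a uniformly powerful $U = P_j(G)$, invoke the Lie correspondence to get a $\Z_pG$-lattice $L$, apply Theorem~\ref{thm:stratification-exists}, then translate back—is indeed the paper's strategy. But you have a genuine gap precisely where you flag the "principal obstacle," and the paper resolves it in a way you have not anticipated.

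You propose to show that $(\log P_{j+i}(G))_i$ and $(\lambda_i(L))_i$ are merely \emph{$c_0$-equivalent}, and to achieve this by "absorbing the accumulating BCH perturbations into a uniformly bounded $p$-power factor." You offer no mechanism for why those perturbations stay bounded; a priori they could accumulate as $i$ grows, and nothing in the BCH formula by itself rules this out. The paper does not attempt any such estimate. Instead it proves the much stronger fact that the two filtrations agree \emph{exactly}: $L_{i-j} = \lambda_{i-j}(L)$ for all $i \ge j$, where $L_{i-j}$ is the Lie sublattice corresponding to $P_i(G)$. This is a clean induction requiring no BCH control at all: since $P_{i-1}(G)$ is powerful, its Frattini quotient $P_{i-1}(G)/P_{i-1}(G)^p$ carries an abelian group structure that is literally identical to the additive structure of $L_{i^*-1}/pL_{i^*-1}$ (by \cite[Cor.~4.15]{DDMS99}), and then both $P_{i-1}(G)/P_i(G)$ and $L_{i^*-1}/\lambda_{i^*}(L)$ are each the largest $G$-central exponent-$p$ quotient of one and the same abelian group, forcing equality. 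The intrinsic characterisation of the lower $p$-series as the fastest descending $G$-central series with elementary abelian sections is what makes this work. Your proposal never identifies this, so the reduction to Theorem~\ref{thm:stratification-exists} is not established.

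Two further points you elide. First, the threshold $i \ge \max\{j+2cd,6cd\}$ is not only there to make the exponents $\lfloor(i-j)\xi_k\rfloor - c$ non-negative; the bound $6cd$ is chosen so that $[P_{i-2cd}(G),P_{i-2cd}(G)] \subseteq P_{2i-4cd}(G) \subseteq P_{i+2cd}(G)$, which is what makes the BCH comparison of multiplication with addition honest at that depth—this is where BCH genuinely enters, and only there. Second, it is not automatic that the product $\overline{\langle x_1^{\,p^{a_1}}\rangle}\cdots\overline{\langle x_d^{\,p^{a_d}}\rangle}$ coincides, as a set, with the Lie sublattice $\bigoplus_k p^{a_k}\Z_p\,\underline{x_k}$ when the $a_k$ differ; the paper proves this by comparing Haar measures of both sets inside $U$. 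Your appeal to "coordinates of the second kind" only gives a product decomposition of $U$ itself, not of a general $G$-invariant sublattice with unequal exponents.
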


\begin{corollary} \label{cor:index-of-p-series-term} In the set-up of
  Theorem~\ref{thm:p-series-of-p-adic-group} and with the additional
  notation $\sigma = \sum_{k=1}^d \xi_k \in [1,d]$, the index of
  $P_i(G)$ in $G$, for $i \ge \max\{j+2cd,6cd\}$, satisfies
  \[
    \Bigl\lvert \log_p \lvert G : P_i(G) \rvert - \big( \log_p \lvert
    G : P_j(G) \rvert + \lfloor (i-j) \sigma \rfloor \big) \Bigr\rvert \le
    d(c+1).
  \]
\end{corollary}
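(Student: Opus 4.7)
The plan is to derive the estimate directly from the sandwich $\check{U}_i \subseteq P_i(G) \subseteq \hat{U}_i$ supplied by Theorem~\ref{thm:p-series-of-p-adic-group}(ii). Since $U = P_j(G)$ is a normal subgroup of $G$ containing $P_i(G)$, one has
\[
  \log_p \lvert G : P_i(G) \rvert = \log_p \lvert G : P_j(G) \rvert + \log_p \lvert U : P_i(G) \rvert,
\]
so the claim reduces to showing $\bigl\lvert \log_p \lvert U : P_i(G) \rvert - \lfloor (i-j)\sigma \rfloor \bigr\rvert \le d(c+1)$, and for this it suffices to sandwich this quantity between the two extremal indices $\log_p \lvert U : \hat{U}_i \rvert \le \log_p \lvert U : P_i(G) \rvert \le \log_p \lvert U : \check{U}_i \rvert$.

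The main input is the exact evaluation of these two indices. Since $U$ is uniformly powerful with minimal generating system $x_1, \ldots, x_d$, the $p$-adic coordinates of the second kind yield a bijection $U \to \Z_p^{\, d}$, $x_1^{\, a_1} \cdots x_d^{\, a_d} \mapsto (a_1, \ldots, a_d)$, and under this bijection the set product $\overline{\langle x_1^{\, p^{e_1}}\rangle} \cdots \overline{\langle x_d^{\, p^{e_d}}\rangle}$ corresponds to the Cartesian product $p^{e_1}\Z_p \times \cdots \times p^{e_d}\Z_p$, which has index $p^{e_1+\cdots+e_d}$ in $U$. The hypothesis $i \ge j + 2cd$ together with $\xi_k \ge \nicefrac{1}{d}$ guarantees $\lfloor (i-j)\xi_k \rfloor - c \ge 0$, so the exponents appearing in $\hat{U}_i$ are legitimate. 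Writing $S_i = \sum_{k=1}^d \lfloor (i-j)\xi_k \rfloor$, I thus obtain
\[
  \log_p \lvert U : \check{U}_i \rvert = cd + S_i \quad \text{and} \quad \log_p \lvert U : \hat{U}_i \rvert = -cd + S_i,
\]
which is consistent with $\hat{U}_i / \check{U}_i$ being homocyclic of rank $d$ and exponent $p^{2c}$.

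To finish, I would invoke the elementary estimate $S_i \le \lfloor (i-j)\sigma \rfloor < S_i + d$, obtained by iterating $\lfloor a \rfloor + \lfloor b \rfloor \le \lfloor a+b \rfloor$. Combined with the sandwich, this shows that $\log_p \lvert U : P_i(G) \rvert - \lfloor (i-j)\sigma \rfloor$ lies in the interval $[-cd - (d-1),\, cd]$ and so has absolute value at most $d(c+1) - 1 \le d(c+1)$. The only substantive step is the exact index computation for the product of procyclic subgroups; the rest reduces to floor-function bookkeeping plus a sanity check that the exponents remain nonnegative throughout the specified range of~$i$, and I do not foresee any serious conceptual obstacle.
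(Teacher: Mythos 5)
Your proof is correct and slightly simpler than the paper's. Both arguments rest on the sandwich $\check{U}_i \subseteq P_i(G) \subseteq \hat{U}_i$ from Theorem~\ref{thm:p-series-of-p-adic-group}, on the index computation for products of procyclic subgroups, and on the floor-function estimate $\lfloor (i-j)\sigma\rfloor - d \le \sum_{k=1}^d \lfloor (i-j)\xi_k\rfloor \le \lfloor (i-j)\sigma\rfloor$.  Where you diverge is in how the sandwich is exploited: the paper introduces the intermediate subgroup $G_i = \overline{\langle x_1^{\, p^{\lfloor(i-j)\xi_1\rfloor}}, \ldots, x_d^{\, p^{\lfloor(i-j)\xi_d\rfloor}}\rangle}$, identifies $P_i(G)/\check{U}_i$ with a subgroup $C_i$ of the homocyclic group $A_i = \hat{U}_i/\check{U}_i$ and bounds $\bigl\lvert \log_p\lvert A_i:C_i \rvert - \log_p\lvert A_i:A_i^{\, p^c}\rvert \bigr\rvert \le cd$ from the elementary-divisor structure of~$C_i$, a step which implicitly uses $c \ge 1$ and prompts the paper's aside about $c = 0$.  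You instead bound $\log_p\lvert U:P_i(G)\rvert$ directly between $\log_p\lvert U:\hat{U}_i\rvert = S_i - cd$ and $\log_p\lvert U:\check{U}_i\rvert = S_i + cd$ with $S_i = \sum_{k=1}^d \lfloor (i-j)\xi_k\rfloor$; this is a coarser input but avoids the auxiliary group and handles $c=0$ uniformly, and the asymmetric interval $[-cd-(d-1),\,cd]$ in fact gives $d(c+1)-1$, marginally sharper than the stated bound.  One small caveat worth keeping in mind: the identity $\log_p\lvert U : \check{U}_i \rvert = cd + S_i$ via the coordinate bijection of the second kind implicitly uses that this bijection preserves Haar measure -- the coordinate map is not a group homomorphism -- which is exactly how the paper justifies the analogous index computation at the end of the proof of Theorem~\ref{thm:p-series-of-p-adic-group}, so your step is sound but does rely on that input.
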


\begin{corollary} \label{cor:xi-unique-for-groups} Let $G$ be a
  $p$-adic analytic pro\nobreakdash-$p$ group of dimension~$d$.  Then
  any two approximations of the lower $p$-series as described in
  Theorem~\ref{thm:p-series-of-p-adic-group} involve the same rational
  growth parameters
  $\nicefrac{1}{d} \le \xi_1 \le \ldots \le \xi_d \le 1$.
\end{corollary}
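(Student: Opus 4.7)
\emph{Plan.} The plan is to derive uniqueness by passing to a single canonical $\Z_pG$-module and invoking Theorem~\ref{thm:stratification-exists}.

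Given two approximations of the lower $p$-series, with data $(j, U, x_1, \ldots, x_d, \xi_1, \ldots, \xi_d, c)$ and $(j', U', x_1', \ldots, x_d', \xi_1', \ldots, \xi_d', c')$ respectively, I would first fix $m \in \N$ sufficiently large that $U^* = P_m(G)$ is uniformly powerful and contained in $U \cap U'$; such $m$ exists since $U = P_j(G)$, $U' = P_{j'}(G)$, and by \cite[Prop.~3.9 and Thm.~4.2]{DDMS99}. Via the uniformity of $U^*$, I would identify it with a $\Z_p$-lattice $L = L_{U^*}$ of dimension~$d$, and the conjugation action of $G$ on $U^*$ endows $L$ with a natural $\Z_pG$-module structure.

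The next step is to pull back each approximation to a filtration of~$L$. Intersecting the sandwich $\check{U}_i \subseteq P_i(G) \subseteq \hat{U}_i$ with $U^*$ and transporting under the uniformity bijection $U^* \leftrightarrow L$, one obtains a pair of $\Z_p$-sublattices of~$L$ trapping the image of $P_i(G) \cap U^*$. Since the frame $(x_1,\ldots,x_d)$ of~$U$ and a minimal generating system of~$U^*$ are related by uniformly bounded $p$-adic rescalings, the translated sandwich has the explicit shape $\bigoplus_k p^{\lfloor (i-j)\xi_k\rfloor + c''}\, \Z_p\, y_k$, for some $\Z_p$-basis $(y_1,\ldots,y_d)$ of $L$ and some bounded offset $c''$. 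This exhibits a stratification of the filtration $\{P_i(G) \cap U^*\}_{i \ge m}$ on~$L$ with growth rate $(\xi_1, \ldots, \xi_d)$. Applying the same procedure to the primed data gives a second stratification of the same filtration, with growth rate $(\xi_1', \ldots, \xi_d')$.

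Finally, I would invoke the compatibility underlying the proof of Theorem~\ref{thm:p-series-of-p-adic-group}: up to an index shift and a uniformly bounded equivalence, the filtration $\{P_i(G) \cap U^*\}_{i \ge m}$, viewed on~$L$, coincides with the lower $p$-series $\{\lambda_i(L)\}_{i \in \N_0}$ of the $\Z_pG$-module~$L$. Hence both pulled-back approximations are stratifications of the lower $p$-series of~$L$, and the uniqueness clause in Theorem~\ref{thm:stratification-exists} forces $(\xi_1, \ldots, \xi_d) = (\xi_1', \ldots, \xi_d')$.

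The likely main obstacle is the bookkeeping in the middle step. The two approximations are phrased using distinct uniformly powerful subgroups with distinct frames, whereas the intrinsic lattice $L$ is built from $U^*$ via its own minimal generating system. Tracking the $p$-adic shifts induced by the uniformity bijection carefully enough to ensure that the pulled-back sandwich retains the exact floor-exponent shape, with the original growth parameters and only a bounded additive offset absorbed into~$c''$, is the delicate part. Once that has been done, the conclusion is a direct application of the uniqueness part of Theorem~\ref{thm:stratification-exists}. As a consistency check, the sum $\sigma = \sum_k \xi_k$ must in any case agree across approximations, since by Corollary~\ref{cor:index-of-p-series-term} it equals the asymptotic slope of $i \mapsto \log_p \lvert G : P_i(G)\rvert$, an invariant of~$G$.
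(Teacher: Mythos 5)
Your proof is correct, and it reaches the conclusion by a somewhat different route than the paper's. The paper fixes one approximation and its associated $\Z_p$-Lie lattice $L$, then recovers each $\xi_k$ intrinsically as the limit $\lim_{i_2 - i_1 \to \infty} m_{i_1,i_2,k}/(i_2-i_1)$, where $p^{m_{i_1,i_2,k}}$ are the elementary divisors of the quotients $A_{i_1}/A_{i_2}$ of the Lie sublattices $A_i \le_\mathrm{o} \Q_p \otimes_{\Z_p} L$ corresponding to $P_i(G)$; since these quotients are (as abelian groups) isomorphic to $P_{i_1}(G)/P_{i_2}(G)$ for large $i_1$, the limits depend only on $G$ itself, which gives uniqueness directly. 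You instead pass to a single uniformly powerful $U^* = P_m(G) \subseteq U \cap U'$, transport both sandwich approximations onto the Lie lattice $L^*$ of $U^*$, identify the resulting filtration with the lower $p$-series of $L^*$ up to a uniform index shift and bounded equivalence (both harmless for growth rates), and invoke the uniqueness clause in Theorem~\ref{thm:stratification-exists} (i.e., Lemma~\ref{lem:same-growth-rates}). The paper's argument is more formulaic and delivers an explicit intrinsic expression for each $\xi_k$; yours is more of a clean reduction to an already-established uniqueness statement. The delicate step you flag---that the rescaling between the $(x_k)$-frame of $U$ and a frame of $U^*$ preserves the floor-exponent shape up to a bounded additive offset---does hold, precisely because the Lie correspondence identifies $\lambda_{m-j}(L)$ with the Lie lattice of $U^*$ so that the relevant sublattice chain is unchanged up to the expected shift. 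Your consistency check via Corollary~\ref{cor:index-of-p-series-term} is a nice sanity observation but, as you note, only pins down $\sum_k \xi_k$ rather than the individual parameters.
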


The last corollary underlines that we have uncovered a new set of
invariants which capture certain structural properties of $p$-adic
analytic pro\nobreakdash-$p$ groups.

\smallskip

One of our motivations for pinning down the lower $p$-series of
$p$-adic analytic pro\nobreakdash-$p$ groups was to settle some basic,
but annoyingly outstanding questions about the Hausdorff spectra of
such groups.  The study and application of Hausdorff dimension in
profinite groups is an ongoing affair; for instance, see
\cite{KlThZu19} for an overview and
\cite{KlTh19,GaGaKl20,GoZo21,HeKl22,HeTh22a,HeTh22b} for some recent
developments.  Let $G$ be an infinite finitely generated
pro\nobreakdash-$p$ group and let
$\mathcal{S} \colon G_i,\, i \in \N_0$, be a \emph{filtration series}
of~$G$, that is, a descending chain
$G = G_0 \supseteq G_1 \supseteq \ldots$ of open normal
subgroups $G_i \trianglelefteq_\mathrm{o} G$ such that
$\bigcap_{i \in \N_0} G_i = 1$.  In accordance with
\cite{Ab94,BaSh97}, the \emph{Hausdorff dimension} of a closed
subgroup $H \subseteq_\mathrm{c} G$, with respect
to~$\mathcal{S}$, admits an algebraic interpretation as a logarithmic
density:
\begin{equation*}
  \hdim_G^\mathcal{S}(H) =
  \varliminf_{i\to \infty} \frac{\log_p \lvert HG_i : G_i
    \rvert}{\log_p \lvert G : G_i \rvert},
\end{equation*}
and the \emph{Hausdorff spectrum} of $G$, with respect to
$\mathcal{S}$,
\[
  \hspec^\mathcal{S}(G) = \{ \hdim_G^\mathcal{S}(H) \mid  H
  \subseteq_\mathrm{c} G \text{ a subgroup} \} \subseteq [0,1]
\]
provides a simple `snapshot' of the typically rather complicated array
of all subgroups in~$G$.  We say that $H$ has \emph{strong Hausdorff
  dimension} in $G$, with respect to $\mathcal{S}$, if
\[
  \hdim^{\mathcal{S}}_G(H) = \lim_{i \to \infty} \frac{\log_p \lvert H
    G_i : G_i \rvert}{\log_p \lvert G : G_i \rvert}
\]
is given by a proper limit.

Even for well-behaved groups, such as $p$-adic analytic
pro\nobreakdash-$p$ groups~$G$, not only the Hausdorff dimension of
individual subgroups, but also qualitative features of the Hausdorff
spectrum of~$G$ are known to be sensitive to the choice
of~$\mathcal{S}$; see~\cite[Thm.~1.3]{KlThZu19}.  In our setting,
where $G$ is a finitely generated pro\nobreakdash-$p$ group, the
group-theoretically relevant choices of $\mathcal{S}$ are primarily:
the $p$-power series~$\mathcal{P}$, the iterated $p$-power
series~$\mathcal{P}^*$, the Frattini series~$\mathcal{F}$, the
dimension subgroup (or Zassenhaus) series~$\mathcal{D}$ and, last but
not least, the lower
$p$-series~$\mathcal{L} \colon P_i(G),\, i \in \N$.  It was shown
in~\cite[Prop.~1.5]{KlThZu19} that, if $G$ is a $p$-adic analytic
pro\nobreakdash-$p$ group, then the Hausdorff dimensions of
a subgroup $H \subseteq_\mathrm{c} G$ with respect to
the series $\mathcal{P}, \mathcal{F}, \mathcal{D}$
coincide. Furthermore,~\cite[Thm.~1.1]{BaSh97} yields,
for $\mathcal{S}$ any one of these series
$\mathcal{P}, \mathcal{F}, \mathcal{D}$, that the spectrum
\[
  \hspec^\mathcal{S}(G) = \Big\{ \nicefrac{\displaystyle
    \dim(H)}{\displaystyle \dim(G)} \mid  H
    \subseteq_\mathrm{c} G \text{ a subgroup} \Big\}
\]
is discrete and that all closed subgroups have strong Hausdorff
dimension in $G$ with respect to $\mathcal{S}$.  These results extend
easily to Hausdorff dimensions with respect to the series
$\mathcal{P}^*$.

It was seen in~\cite{KlThZu19} that, contrary to indications
in~\cite{BaSh97}, the Hausdorff spectra of $p$-adic analytic
pro\nobreakdash-$p$ groups $G$ with respect to the lower $p$-series
$\mathcal{L}$ follow a different pattern.  Concretely,
\cite[Example~4.1]{KlThZu19} provides a family of $p$\nobreakdash-adic
analytic pro\nobreakdash-$p$ groups $G$ such that
$\lvert \hspec^\mathcal{L}(G) \rvert / \dim(G)$ is unbounded as
$\dim(G) \to \infty$.  It remained open whether
$\hspec^\mathcal{L}(G)$ could even be infinite for some $p$-adic
analytic groups~$G$.  Theorem~\ref{thm:stratification-exists} allows
us to establish that $\hspec^\mathcal{L}(G)$ is in fact
finite for every $p$-adic analytic pro\nobreakdash-$p$ group $G$ and
to put in place further restrictions.

\begin{theorem} \label{thm:spectrum-p-adic} Let $G$ be an
    infinite $p$-adic analytic pro\nobreakdash-$p$ group and let
  $d = \dim(G)$.  Then there exist
  $\xi_1,\ldots,\xi_d \in [\nicefrac{1}{d},1] \cap \Q$ such that
  \[
    \hspec^{\mathcal{L}}(G) \subseteq \left\{\frac{\varepsilon_1\xi_1+
        \ldots + \varepsilon_d\xi_d}{\xi_1 + \ldots + \xi_d} \mid
      \varepsilon_1, \ldots, \varepsilon_d \in \{0,1\} \right\}.
  \]
  
  In particular, this shows that $\lvert \hspec^{\mathcal{L}}(G) \rvert \le 2^d$.  The parameters $\xi_1, \ldots, \xi_d$ are the same as those appearing in Theorem~\ref{thm:p-series-of-p-adic-group}, and all closed subgroups have strong Hausdorff dimension in~$G$ with respect to~$\mathcal{L}$.
\end{theorem}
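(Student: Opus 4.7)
The plan is to combine the structural description of the lower $p$-series in Theorem~\ref{thm:p-series-of-p-adic-group} with the Lie correspondence for uniform pro-$p$ groups to reduce the Hausdorff dimension calculation for closed subgroups to a linear-algebra problem in a $\Z_p$-lattice. Let $U = P_j(G)$, with generators $x_1, \ldots, x_d$ and approximating subgroups $\check{U}_i, \hat{U}_i$, be as in Theorem~\ref{thm:p-series-of-p-adic-group}. Since $H$ is $p$-adic analytic, by passing to $H'' := H \cap P_{j''}(G)$ with $j'' \ge j$ chosen large enough we may ensure that $H''$ is uniform; because $|G : P_{j''}(G)| < \infty$ this replacement changes $\log_p |HP_i(G) : P_i(G)|$ only by $O(1)$. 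The sandwich $\check{U}_i \subseteq P_i(G) \subseteq \hat{U}_i$ together with $|\hat{U}_i : \check{U}_i| \le p^{2cd}$ then yields
\[
  \log_p |HP_i(G) : P_i(G)| = \log_p |H'' : H'' \cap \check{U}_i| + O(1),
\]
while Corollary~\ref{cor:index-of-p-series-term} gives $\log_p |G : P_i(G)| = (i-j)\sigma + O(1)$ with $\sigma = \sum_k \xi_k$. Hence everything hinges on estimating $\log_p |H'' : H'' \cap \check{U}_i|$ asymptotically.

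Next I would transfer to the Lie lattice $L_U := \log U$, which has $\Z_p$-basis $X_k := \log x_k$. In these coordinates $\check{U}_i$ corresponds to $\mathfrak{C}_i := \bigoplus_k p^{m_k}\Z_p X_k$ with $m_k = \lfloor (i-j)\xi_k\rfloor + c$, and $H''$ corresponds to a $\Z_p$-Lie sublattice $\mathfrak{h} \subseteq L_U$ of rank $e := \dim H$. For all sufficiently large $i$, $H'' \cap \check{U}_i$ is uniform with Lie sublattice $\mathfrak{h} \cap \mathfrak{C}_i$, so $|H'' : H'' \cap \check{U}_i| = |\mathfrak{h} : \mathfrak{h} \cap \mathfrak{C}_i|$. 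By Theorem~\ref{thm:stratification-exists}(ii),(iii), grouping the $X_k$ according to the distinct values $\eta_1 < \cdots < \eta_s$ of $\xi_1 \le \cdots \le \xi_d$ with multiplicities $d_\beta$ produces a $G$-invariant flag $0 = \Lambda_0 \subsetneq \cdots \subsetneq \Lambda_s = L_U$ where $\Lambda_\beta = \bigoplus_{\xi_k \le \eta_\beta} \Z_p X_k$. The induced filtration $\mathfrak{h}^{(\beta)} := \mathfrak{h} \cap \Lambda_\beta$ has graded pieces which embed as pure $\Z_p$-sublattices of $\Lambda_\beta/\Lambda_{\beta-1}$ of ranks $e_\beta \in \{0, 1, \ldots, d_\beta\}$ satisfying $\sum_\beta e_\beta = e$.

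Choosing a $\Z_p$-basis $Y_1, \ldots, Y_e$ of $\mathfrak{h}$ adapted to this filtration, so that each $Y_k$ lies in a block $\beta(k)$ and descends to a primitive element of $\Lambda_{\beta(k)}/\Lambda_{\beta(k)-1}$, and using that $\mathfrak{C}_i \cap \Lambda_\beta$ projects onto $p^{\lfloor (i-j)\eta_\beta\rfloor + c}(\Lambda_\beta/\Lambda_{\beta-1})$, primitivity forces
\[
  \mathfrak{h} \cap \mathfrak{C}_i = \bigoplus_{k=1}^e p^{\lfloor (i-j)\eta_{\beta(k)}\rfloor + c}\,\Z_p Y_k,
\]
whence $\log_p |\mathfrak{h} : \mathfrak{h} \cap \mathfrak{C}_i| = (i-j)\sum_\beta e_\beta \eta_\beta + O(1)$. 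Writing $\sum_\beta e_\beta \eta_\beta = \sum_k \varepsilon_k \xi_k$ for suitable $\varepsilon_k \in \{0,1\}$ (selecting $e_\beta$ of the $d_\beta$ positions in each block), one obtains
\[
  \hdim_G^{\mathcal{L}}(H) = \lim_{i \to \infty}\frac{(i-j)\sum_k \varepsilon_k \xi_k + O(1)}{(i-j)\sigma + O(1)} = \frac{\sum_k \varepsilon_k \xi_k}{\sum_k \xi_k}
\]
as a proper limit, which simultaneously places $\hdim_G^{\mathcal{L}}(H)$ in the predicted set of at most $2^d$ rationals and yields strong Hausdorff dimension. I expect the hardest step to be the clean execution of the Lie correspondence in the second paragraph: verifying that $\mathfrak{C}_i$ is genuinely a Lie sublattice (its closure under the bracket relies essentially on the $G$-invariance of each $\Lambda_\beta$) and corresponds precisely to $\check{U}_i$, and that for large $i$ the subgroup $H'' \cap \check{U}_i$ remains uniform with Lie sublattice exactly $\mathfrak{h} \cap \mathfrak{C}_i$.
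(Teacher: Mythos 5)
Your overall strategy --- passing to the uniform subgroup $U = P_j(G)$, translating to the associated $\Z_p$-Lie lattice, and computing the Hausdorff dimension as a density in the lattice via the stratification --- matches the paper's, which packages the final lattice computation as Theorem~\ref{thm:hspec-of-lattice}. However, two steps need repair. The assertion that $H'' := H \cap P_{j''}(G)$ becomes uniform for $j''$ large is false in general: a closed subgroup of a uniform group need not be powerful, and $H \cap P_{j''}(G)$ is not a term of the lower $p$-series of $H$. What is available, and what the paper uses, is that $H$ is $p$-adic analytic of finite rank and therefore has a uniformly powerful \emph{open subgroup} $W \le_{\mathrm{o}} H$ with $W \subseteq U$; passing from $H$ to such a $W$ changes $\log_p\lvert HP_i(G):P_i(G)\rvert$ by at most $\log_p\lvert H : W\rvert = O(1)$, which is all you need.

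The identity $\lvert H'' : H'' \cap \check{U}_i \rvert = \lvert \mathfrak{h} : \mathfrak{h} \cap \mathfrak{C}_i \rvert$ that carries the Lie correspondence is flagged as the hardest step but left unexecuted, and the specific worry you record --- whether $H'' \cap \check{U}_i$ stays uniform with Lie sublattice $\mathfrak{h}\cap\mathfrak{C}_i$ --- is a red herring: no uniformity of the intersection is required. The paper's route, for the equivalent product statement $\lvert WP_i(G):P_i(G)\rvert=\lvert M+L_{i^*}:L_{i^*}\rvert$, is to choose $m$ with $U^{p^m}\le P_i(G)$, so $p^mL\le L_{i^*}$, apply \cite[Cor.~4.15]{DDMS99} to identify multiplicative $U^{p^m}$-cosets with additive $p^mL$-cosets, and note that $WP_i(G)$ is powerful (a powerful subgroup times a powerfully embedded subgroup) with Lie lattice $M+L_{i^*}$. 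Your intersection version can be salvaged along the same lines by applying the coset identification inside the uniform group $W$ itself, picking $m$ with $W^{p^m}\subseteq W\cap\check{U}_i$; but some such argument must actually be supplied. A further minor point: the graded pieces $\mathfrak{h}^{(\beta)}/\mathfrak{h}^{(\beta-1)}$ need not embed as \emph{pure} sublattices of $\Lambda_\beta/\Lambda_{\beta-1}$ when $\mathfrak{h}$ is not isolated in $L_U$, so ``primitivity forces'' is too strong as written; this only perturbs bounded constants and does not harm the density limit, but the adapted basis should be normalised (e.g.\ via the isolator of $\mathfrak{h}$) for the block formula to hold literally.
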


It is a natural problem to improve, if possible, upon the upper bound
for $\lvert \hspec^{\mathcal{L}}(G) \rvert$ as a function of
$d = \dim(G)$.  In Example~\ref{ex:max-value} we manufacture a family
of $p$-adic analytic pro\nobreakdash-$p$ groups $G_m$,
$m \in \N$, such that
\[
  \frac{\log_p \bigl\lvert \hspec^\mathcal{L}(G_m)
    \bigr\rvert}{\sqrt{\dim(G_m)}} \to \infty \qquad \text{as
    $m \to \infty$.}
\]

Theorem~\ref{thm:spectrum-p-adic} triggers more refined questions about Hausdorff spectra with respect to the lower $p$-series.  We address some of these in a separate paper~\cite{HeKlTh26}. For instance, one outstanding problem is to understand what renders the Hausdorff spectrum of a finitely generated pro-$p$ group finite. Using Lie-theoretic tools we show that non-abelian finite direct products of free pro-$p$ groups have full Hausdorff spectrum $[0,1]$ with respect to the lower $p$-series, in analogy to~\cite{GaGaKl20}.

\medskip

\noindent \textit{Notation.}  Generally, any new notation is explained
when needed, sometimes implicitly.  For convenience we collect basic
conventions here.

We denote by $\N$ the set of positive integers and by $\N_0$ the set
of non-negative integers.  Throughout, $p$ denotes a prime, and $\Z_p$
is the ring of $p$-adic integers (or simply its additive group).  We
write $\varliminf a_i = \varliminf_{i \to \infty} a_i$ for the lower
limit (limes inferior) of a sequence $(a_i)_{i \in \N}$ in
$\R \cup \{ \pm \infty \}$.  Intervals of real numbers are written as
$[a,b]$, $(a,b]$ et cetera.
 
The group-theoretic notation is mostly standard and in line, for
instance, with its use in~\cite{DDMS99}.  Tacitly, subgroups of
profinite groups are generally understood to be closed subgroups.  In
some places, we emphasise that we are taking the topological closure
of a set $X$ by writing~$\overline{X}$.

\medskip

\noindent \textit{Organisation.}  In
Section~\ref{sec:stratifications}, we study stratifications of the
lower \texorpdfstring{$p$}{p}-series and establish
Theorems~\ref{thm:stratification-exists} and
\ref{thm:p-series-of-p-adic-group}, as well as their corollaries.  In
Section~\ref{sec:hspec} we turn to Hausdorff spectra with respect to
the lower \texorpdfstring{$p$}{p}-series: using the results from the
previous section, we prove Theorem~\ref{thm:spectrum-p-adic}.

\medskip

\noindent \textbf{Acknowledgements.} 
We thank both referees for very useful comments and suggestions.


\section{Stratifications of the lower
  \texorpdfstring{$p$}{p}-series} \label{sec:stratifications}

Let $G$ be a profinite group, and let $L$ be a $\Z_p$-lattice equipped
with a right $G$-action.  We are interested in situations, where the
lower $p$-series $\lambda_i(L)$, $i \in \N_0$, of $L$, as defined in
\eqref{equ:def-lower-p-of-L}, satisfies
$\bigcap_{i \in \N_0} \lambda_i(L) = \{0\}$ and hence gives a
filtration series.  This is the case if and only if $G$ acts
unipotently on every principal congruence quotient $L/p^j L$,
$j \in \N$, which in turn is true if and only if the action of $G$ on
the $\Z_p$-lattice $L \cong \Z_p^{\, d}$ factors through a
pro\nobreakdash-$p$ subgroup of $\mathsf{GL}_d(\Z_p)$.  Since every
closed subgroup of $\mathsf{GL}_d(\Z_p)$ is topologically finitely
generated, we may restrict without any true loss of generality to the
situation that $G$ itself is a finitely generated pro\nobreakdash-$p$
group.  A theorem of Serre implies that every abstract action of the
group $G$ on the group $L$ is continuous; see~\cite[Sec.~1]{DDMS99}.
Thus we arrive at the standard set-up which we use henceforth: $L$ is
a $\Z_p$-lattice equipped with a right $G$-action, for a finitely
generated pro\nobreakdash-$p$ group~$G$.

\smallskip

We start with a basic observation about growth rates of 
stratifications.

\begin{lemma} \label{lem:same-growth-rates} Let
$\mathcal{S} \colon L = L_0 \supseteq L_1 \supseteq
    \ldots$ be a filtration series of a $\Z_p$-lattice $L$ which
  admits a stratification.  Then any two stratifications of
  $\mathcal{S}$ have the same growth rate.
\end{lemma}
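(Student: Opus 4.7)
The plan is to characterise the growth rate intrinsically, in terms of data that depends only on the filtration series $\mathcal{S}$ itself, so that the same $d$-tuple is recovered by any stratification. Let $\mathcal{S}^* \colon L_i^* = \bigoplus_{k=1}^d p^{\lfloor i \xi_k \rfloor} \Z_p x_k$ and $\mathcal{S}^{**} \colon L_i^{**} = \bigoplus_{k=1}^d p^{\lfloor i \eta_k \rfloor} \Z_p y_k$ be two stratifications of $\mathcal{S}$, with growth rates $(\xi_k)$ and $(\eta_k)$. Since both are equivalent to $\mathcal{S}$, they are equivalent to each other, so I fix $c \in \N_0$ with $p^c L_i^* \subseteq L_i^{**}$ and $p^c L_i^{**} \subseteq L_i^*$ for every $i$. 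By inspection, the sorted sequence of elementary divisor exponents of $L_i^*$ in $L$ is precisely $\lfloor i \xi_1 \rfloor \le \ldots \le \lfloor i \xi_d \rfloor$, and similarly $\lfloor i \eta_1 \rfloor \le \ldots \le \lfloor i \eta_d \rfloor$ for~$L_i^{**}$.

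The central step is the auxiliary claim that, for full-rank sublattices $M, N \subseteq L$ with $p^c M \subseteq N$ and $p^c N \subseteq M$, the sorted elementary divisor exponent sequences $m_1 \le \ldots \le m_d$ and $n_1 \le \ldots \le n_d$ of $M$ and $N$ in $L$ satisfy $\lvert m_k - n_k \rvert \le c$ for every~$k$. I plan to establish this via the min--max formula
\[
  m_{d-k+1} \;=\; \max_{\substack{V \subseteq L \otimes_{\Z_p} \Q_p \\ \dim_{\Q_p} V = k}} \;\; \min_{\substack{v \,\in\, V \cap L \\ v \,\notin\, p L}} \;\nu_M(v), \qquad \nu_M(v) := \min\{\ell \in \Z : p^\ell v \in M\}.
\]
Verifying the formula reduces to a direct computation in a $\Z_p$-basis of $L$ that diagonalises $M$: the maximum is realised by the $\Q_p$-span of the $k$ basis vectors carrying the largest diagonal exponents, while for any other $V$ of dimension $k$ a dimension count shows that $V$ meets the $\Q_p$-span of the remaining $d-k+1$ basis vectors non-trivially, producing a primitive vector $v \in V \cap L$ with $\nu_M(v) \le m_{d-k+1}$. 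The hypotheses $p^c M \subseteq N$ and $p^c N \subseteq M$ translate at once into the pointwise bound $\lvert \nu_M(v) - \nu_N(v) \rvert \le c$ on $L \setminus \{0\}$, and inserting this bound into the formula yields $\lvert m_k - n_k \rvert \le c$ as required.

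Applying the auxiliary claim to $M = L_i^*$ and $N = L_i^{**}$ gives $\lvert \lfloor i\xi_k\rfloor - \lfloor i\eta_k\rfloor \rvert \le c$ uniformly in $i \in \N$, and dividing by $i$ and letting $i \to \infty$ forces $\xi_k = \eta_k$ for every~$k$. I expect the main obstacle to lie in setting up the min--max description cleanly; in particular, the restriction to \emph{primitive} representatives $v \notin pL$ is essential, since without it the inner minimum would drift to $-\infty$. Once the formula is in place, the remainder of the argument is bookkeeping.
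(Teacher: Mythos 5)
Your argument is correct and rests on the same underlying idea as the paper's proof: both recover the growth rate from the sorted elementary divisor exponents $m_{i,k}$ of the finite $\Z_p$-modules $L/L_i$, showing $\xi_k = \lim_{i\to\infty} m_{i,k}/i$ depends only on $\mathcal{S}$. The only real difference is in implementation: the paper handles the stability of $(m_{i,k})$ under passage to a $c$-equivalent series somewhat tersely, by choosing $j$ large enough that the distinct values $\lfloor i\xi_e\rfloor$ separate by more than $2c$, whereas you make the same stability fully explicit via the min--max characterisation of elementary divisors, which is a clean and self-contained way to do it.
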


\begin{proof}
  Let $d = \dim_{\Z_p} L$ and, for $i \in \N$, let
  $p^{m_{i,1}}$, \ldots, $p^{m_{i,d}}$ with
  $0 \le m_{i,1} \le \ldots \le m_{i,d}$ be the elementary divisors of
  the finite $\Z_p$-module $L/L_i$ so that
  \[
    L/L_i \cong \Z_p / p^{m_{i,1}}\Z_p \,\oplus \ldots \oplus\,
    \Z_p / p^{m_{i,d}}\Z_p.
  \]
  
  Let $(x_1,\ldots,x_d)$ be the frame for a stratification of
  $\mathcal{S}$, with growth rate $(\xi_1, \ldots, \xi_d)$, say. Then
  there is a constant $c \in \N$ such that $\mathcal{S}$ is
  $c$-equivalent to the filtration series
  $\bigoplus_{k=1}^d p^{\lfloor i \xi_k \rfloor} \Z_p x_k$,
  $i \in \N_0$.  Recall that $\xi_1 \le \ldots \le \xi_d$ and choose
  $j \in \N$ such that
  $\lfloor j \xi_e \rfloor + c < \lfloor j \xi_{e+1} \rfloor - c$ for
  all $e \in \{1, \ldots, d-1\}$ with $\xi_e <
  \xi_{e+1}$.  By definition of stratification, this
  implies that for all $i \in \N$ with $i \ge j$,
  \[
    \lvert m_{i,k} - \lfloor i \xi_k \rfloor \rvert \le c \qquad
    \text{for $1 \le k \le d$.}
  \]
  Thus each $\xi_k = \lim_{i \to \infty} m_{i,k}/i$ is uniquely
  determined by~$\mathcal{S}$.
\end{proof}

Next we recall and adapt the following definitions from~\cite{Kl03}:
the \emph{lower} and \emph{upper level} of an open $\Z_pG$-submodule
$M$ in~$L$ are
\[
  \ell_L(M) = \min \{k\in \N_0 \mid p^kL \subseteq M \} \;\text{and}\;
  u_L(M) = \max \{k\in \N_0 \mid M \subseteq p^kL\},
\]
and the \emph{rigidity} of~$L$ is
\[
  r(L) = \sup \{\ell_L(M) - u_L(M) \mid \text{$M$ is an open
    $\Z_pG$-submodule of $L$} \}.
\]
For the trivial lattice $L = \{0\}$ the formalism yields $r(L)= -\infty$.

The following auxiliary result is a variation of
\cite[Prop.~4.3]{Kl03} and can be proved in the same way, using
compactness.

\begin{proposition}
  \label{pro:rigid}
  Let $G$ be a finitely generated pro\nobreakdash-$p$ group, and let
  $L$ be a $\Z_p$-lattice equipped with a right $G$-action.  Then
  $r(L)$ is finite if and only if $\Q_p\otimes_{\Z_p} L$ is a simple
  $\Q_pG$-module.
\end{proposition}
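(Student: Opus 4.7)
The plan is to prove the two implications separately, with the easier direction handled by an explicit construction and the harder one by a compactness argument modelled on~\cite[Prop.~4.3]{Kl03}.

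For the implication ``$V := \Q_p \otimes_{\Z_p} L$ not simple $\Rightarrow r(L) = \infty$'', I would choose a proper nonzero $\Q_pG$-submodule $W \subsetneq V$ and set $N := p^{-u}(W \cap L)$, where $u := u_L(W \cap L)$. This produces a nonzero closed $\Z_pG$-submodule of $L$ with $u_L(N) = 0$ and $\Q_p N = W$, so $N$ has $\Z_p$-rank strictly less than $d := \dim_{\Z_p} L$. The open $\Z_pG$-submodules $M_k := N + p^k L$, $k \in \N$, then satisfy $u_L(M_k) = 0$, and I expect $\ell_L(M_k) \to \infty$: otherwise some $C$ would bound all $\ell_L(M_k)$, forcing
\[
  p^C L \,\subseteq\, \bigcap_{k \in \N} (N + p^k L) \,=\, N,
\]
hence $\Q_p N = V$, contradicting $W \subsetneq V$.

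For the converse, assuming $V$ is simple, my starting observation is that every nonzero closed $\Z_pG$-submodule $N \le L$ satisfies $\Q_p N = V$ by simplicity, hence has full $\Z_p$-rank, making $L/N$ finite and yielding $p^{k_N} L \subseteq N$ for some $k_N \in \N_0$. The task then reduces to bounding $k_N$ uniformly. Since for any open $\Z_pG$-submodule $M \le L$ the rescaled submodule $p^{-u_L(M)} M$ has lower level equal to $\ell_L(M) - u_L(M)$, it suffices to bound $\ell_L(N)$ uniformly over all closed $\Z_pG$-submodules $N \le L$ with $u_L(N) = 0$. I would partition such $N$ according to their image in the finite $\F_pG$-module $L/pL$: for each nonzero $\F_pG$-subspace $W \le L/pL$ with preimage $\tilde W \le L$, set
\[
  \mathcal{X}_W := \{ N \le L \mid N \text{ is a closed } \Z_pG\text{-submodule with } N + pL = \tilde W \}.
\]
Equipping the space of closed $\Z_pG$-submodules of $L$ with the inverse-limit topology coming from the identification with compatible families of $\Z_pG$-submodules of the finite quotients $L/p^jL$, $j \in \N$, I expect this space to be profinite and each $\mathcal{X}_W$ to be closed, hence compact. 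For fixed $W$, the sets
\[
  U_k := \{ N \in \mathcal{X}_W \mid p^k L \not\subseteq N \}, \qquad k \in \N,
\]
would be open in $\mathcal{X}_W$, decrease with $k$, and satisfy $\bigcap_k U_k = \emptyset$ by the starting observation (every $N \in \mathcal{X}_W$ is nonzero since $W \neq 0$). Compactness then forces $U_{k^*} = \emptyset$ for some $k^*$, i.e.\ $\ell_L(N) \le k^*$ on $\mathcal{X}_W$. Taking the maximum over the finitely many nonzero $W \le L/pL$ would complete the argument.

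The main technical obstacle will be to set up the compactness framework, namely to confirm that the space of closed $\Z_pG$-submodules of $L$ is compact in a topology for which both ``$p^k L \subseteq N$'' and ``$N + pL = \tilde W$'' are closed conditions. Viewing each such $N$ as the inverse limit of its images in the finite quotients $L/p^jL$, the space embeds as a closed subset of the profinite product $\prod_{j \in \N} \{\text{submodules of } L/p^jL\}$; at that point the first condition is closed at level $j = k$ and the second at level $j = 1$. Once this is in place, the proof follows the same pattern as~\cite[Prop.~4.3]{Kl03}.
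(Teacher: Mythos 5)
The paper does not give a detailed argument for Proposition~\ref{pro:rigid}; it states that the result ``is a variation of \cite[Prop.~4.3]{Kl03} and can be proved in the same way, using compactness.'' Your plan follows exactly that route, so the approach is the intended one. The direction ``$V$ not simple $\Rightarrow r(L)=\infty$'' is correct as you have written it (and can be streamlined: $W\cap L$ is a pure $\Z_p$-sublattice, so $u_L(W\cap L)=0$ automatically and $N=W\cap L$). The reduction in the converse direction -- pass to closed $\Z_pG$-submodules with $u_L=0$, stratify by the image $W\le L/pL$, and apply compactness on each $\mathcal{X}_W$ -- is also the right skeleton.

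The gap is in the compactness step itself. You assert that the $U_k=\{N\in\mathcal{X}_W \mid p^kL\not\subseteq N\}$ are \emph{open}, decreasing and have empty intersection, and that compactness therefore forces some $U_{k^*}=\varnothing$. That inference is false for merely open sets: in the compact space $\{0\}\cup\{1/n : n\in\N\}$ the open sets $U_k=\{1/n : n>k\}$ decrease to $\varnothing$ without ever being empty. What you actually need is that each $U_k$ is \emph{closed}, i.e.\ that $\{N\in\mathcal{X}_W \mid p^kL\subseteq N\}$ is \emph{open}, so that the complements $U_k^{\,c}$ form a nested open cover and compactness gives $U_{k^*}^{\,c}=\mathcal{X}_W$ for some $k^*$. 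This does hold, but requires an argument you have not supplied: for a closed $\Z_p$-submodule $N\le L$ one has $p^kL\subseteq N$ if and only if $p^kL\subseteq N+p^{k+1}L$, because $p^kL=(N\cap p^kL)+p\cdot p^kL$ forces $p^kL=N\cap p^kL$ by Nakayama. Hence the condition depends only on the single coordinate $N_{k+1}$ in $\prod_j\{\text{submodules of }L/p^jL\}$ and is therefore \emph{clopen} -- not merely a countable intersection of clopen conditions over all $j>k$, which would only give closedness. Your remark that ``the first condition is closed at level $j=k$'' both has an off-by-one (it is level $k+1$) and, more importantly, conflates closed with clopen: without the Nakayama observation above, you only get that $\{p^kL\subseteq N\}$ is closed, which points the compactness argument in the wrong direction and does not suffice.
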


In conjunction with Lemma~\ref{lem:same-growth-rates}, the next
proposition already establishes a special case of
Theorem~\ref{thm:stratification-exists}.

\begin{proposition}
  \label{pro:simple}
  Let $G$ be a finitely generated pro\nobreakdash-$p$ group, and let
  $L$ be a $\Z_p$-lattice equipped with a right $G$-action.  Suppose
  that $\Q_p\otimes_{\Z_p} L$ is a simple $\Q_pG$-module.  Then the
  lower $p$-series of the $\Z_pG$-module $L$ admits a stratification
  with constant growth rate $(\xi,\ldots,\xi)$, where
  $\xi \in [\nicefrac{1}{d},1] \cap \Q$ for $d = \dim_{\Z_p}L$.
\end{proposition}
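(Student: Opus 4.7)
The plan is to leverage the simplicity hypothesis through Proposition~\ref{pro:rigid}, which guarantees that $r := r(L) < \infty$. This rigidity bound is the core tool: it forces every $\Z_pG$-submodule of $L$ of full $\Z_p$-rank to be sandwiched tightly between two consecutive $p$-power multiples of $L$, thereby giving the quantitative control needed to locate a single constant growth rate.

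First I will set $u_i := u_L(\lambda_i(L))$, so that $p^{u_i+r} L \subseteq \lambda_i(L) \subseteq p^{u_i} L$. The inclusion $p \in \aug_G$ gives $\lambda_i(L) \supseteq p^i L$, hence $u_i \le i$. For a lower bound, I combine $\lvert L : \lambda_i(L) \rvert \le p^{d(u_i + r)}$ (from the sandwich) with $\lvert L : \lambda_i(L) \rvert \ge p^i$; the latter follows from the fact that each $\F_p$-module $\lambda_i(L)/\lambda_{i+1}(L)$ is non-zero, an instance of Nakayama's lemma in $\Z_p[\![G]\!]$ whose Jacobson radical is the (closure of the) $p$-augmentation ideal. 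Thus $u_i \ge i/d - r$. Superadditivity $u_{i+j} \ge u_i + u_j$ is immediate from $\lambda_{i+j}(L) = \lambda_i(L)\,\aug_G^j \subseteq p^{u_i} \lambda_j(L) \subseteq p^{u_i + u_j} L$, so by Fekete's lemma $\xi := \lim_{i \to \infty} u_i/i = \sup_i u_i/i$ exists and lies in $[\nicefrac{1}{d}, 1]$.

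The crux is to show that $\xi$ is rational. For this I consider the normalised lattices $\tilde{\lambda}_i := p^{-u_i} \lambda_i(L)$, which by the sandwich all satisfy $p^r L \subseteq \tilde{\lambda}_i \subseteq L$. Since $L/p^r L$ is finite, only finitely many such $\Z_pG$-submodules occur; a pigeonhole yields indices $i_0 \ge 0$ and $b \ge 1$ with $\tilde{\lambda}_{i_0} = \tilde{\lambda}_{i_0+b}$, that is, $\lambda_{i_0+b}(L) = p^a \lambda_{i_0}(L)$ for $a := u_{i_0+b} - u_{i_0}$. Multiplying both sides by $\aug_G^k$ and iterating in $m$ propagates this to $\lambda_{i_0 + k + mb}(L) = p^{ma} \lambda_{i_0 + k}(L)$ for all $k, m \ge 0$, and hence $u_{i_0 + k + mb} = u_{i_0 + k} + ma$. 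It follows that $\xi = a/b \in \Q$ and that $\lvert u_n - n\xi \rvert$ is uniformly bounded in $n$.

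To conclude, I fix any $\Z_p$-basis $x_1, \ldots, x_d$ of $L$. The sandwich $p^{u_n + r} L \subseteq \lambda_n(L) \subseteq p^{u_n} L$ combined with the boundedness of $u_n - \lfloor n\xi \rfloor$ shows that $\lambda_n(L)$ is $c$-equivalent, for a uniform $c \in \N_0$, to $\bigoplus_{k=1}^d p^{\lfloor n\xi \rfloor}\, \Z_p x_k$; therefore the lower $p$-series of $L$ admits a stratification with the frame $(x_1, \ldots, x_d)$ and constant growth rate $(\xi,\ldots,\xi)$, and uniqueness of the rate is Lemma~\ref{lem:same-growth-rates}. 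The main obstacle is the rationality step: without the finite pool of normalised lattices supplied by $r(L) < \infty$, the pigeonhole breaks down, which is exactly where the simplicity hypothesis is essential.
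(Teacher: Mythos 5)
Your proof is correct and takes essentially the same approach as the paper: both invoke finite rigidity via Proposition~\ref{pro:rigid} so that the lower $p$-series is sandwiched between $p^{u_i}L$ and $p^{u_i+r}L$, confine the normalised terms to a finite set of $\Z_pG$-submodules between $p^rL$ and $L$, and use a pigeonhole to extract eventual periodicity $\lambda_{i+b}(L)=p^a\lambda_i(L)$ and hence the rational constant growth rate $\xi=a/b$. Your normalised lattices $\tilde{\lambda}_i = p^{-u_i}\lambda_i(L)$ play exactly the role of the reference-group images $R_i = L_i\varphi_i$ in the paper's proof, and the Fekete/superadditivity step is a harmless redundancy that the subsequent pigeonhole conclusion already supersedes.
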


\begin{proof}
  Denote by $L_i = \lambda_i(L)$, $i \in \N_0$, the terms of the lower
  $p$-series of $L$, and for $i \in \N_0$ put $u_i = u_L(L_i)$.  By
  Proposition~\ref{pro:rigid} the $\Z_pG$-module $L$ has finite
  rigidity $r =r(L)$.  Consequently, the lower $p$-series of $L$ is
  $r$-equivalent to the filtration series
 $\mathcal{S} \colon L \supseteq p^{u_1}L \supseteq
    p^{u_2}L \supseteq \ldots$, and it suffices to show that
  $\mathcal{S}$ is equivalent to a series of the form
  $p^{\lfloor i \xi \rfloor} L$, $i \in \N_0$, for suitable
  $\xi \in [\nicefrac{1}{d},1] \cap \Q$.
    
  Write $L = \bigoplus_{k=1}^d \Z_p x_k$ and choose, as a
  reference group, a homogeneous abelian group of exponent $p^r$ on
  $d$ generators,
  $R =\langle z_1, \ldots, z_d\rangle \cong
  C_{p^r}\times\overset{d}{\ldots}\times C_{p^r}$.  For each $i\in\N$
  consider the epimorphism
  \begin{equation*}
    \varphi_i \colon p^{u_i}L   \to p^{u_i}L / p^{u_i+r}L \cong  R 
  \end{equation*}
  defined by the dictum $p^{u_i} x_k \mapsto z_k$ for $1 \le k \le d$.
  For each $i \in \N$, we have $\ell_L(L_i) \le u_i + r$, hence
  $p^{u_i+r} L \subseteq L_i \subseteq p^{u_i} L$
  by the definition of $u_i$ and~$r$, and we consider
  $R_i = L_i \varphi_i$.

  Since $R$ is finite, there exist $j, m \in \N$ such that
  $R_{j+m} = R_j$, and therefore $L_{j+m} = p^n L_j $ for
  $n = u_{j+m} - u_j \ge 1$.  Furthermore $p^m L_j \subseteq L_{j+m}$
  yields $n \le m$.  We see that
  $L_{i+m} = L_{j+m} . \aug_{\Z_pG}^{\, i-j} = p^n L_j . 
  \aug_{\Z_pG}^{\, i-j} = p^n L_i$ for all $i \in \N$
  with $i\ge j$.  In particular, this shows that $u_{i+m} = u_i + n$
  for $i \ge j$.  Put $\xi = n/m \in (0,1] \cap \Q$ and
  \[
   c = \bigl\lceil 1 + \max \big\{ \lvert u_i - i \xi \rvert \bigm\vert 0 \le i
    \le j+m-1 \big\} \bigr \rceil \in \N.
  \]
  Then
  \[
    \lvert u_i - \lfloor i \xi \rfloor \rvert \le \lvert u_i - i \xi
    \rvert + 1 \le c \qquad \text{for all $i \in \N_0$,}
  \]
  and the filtration series $\mathcal{S}$ is $c$-equivalent to
  $\mathcal{S}^* \colon p^{\lfloor i \xi \rfloor} L$, $i \in \N_0$.
  Furthermore, we notice that
  $L = L_0 \supsetneqq L_1 \supsetneqq \ldots$ yields the trivial
  lower bound $\lvert L : L_i \rvert \ge p^i$.   Combining
    these two results, we arrive at
  \[
    \xi = \lim_{i \to \infty} \frac{d \lfloor i \xi \rfloor}{di} =
    \lim_{i \to \infty} \frac{\log_p \lvert L : p^{\lfloor i \xi
        \rfloor} L \rvert}{di} = \lim_{i \to \infty} \frac{\log_p
      \lvert L : L_i \rvert}{di} \ge \nicefrac{1}{d}. \qedhere
  \] 
\end{proof}
  
The next proposition provides a key tool for proving
Theorem~\ref{thm:stratification-exists}; with its corollary it yields,
in particular, the last two assertions in the theorem.

\begin{proposition} \label{pro:stratification-restricts-to-small-xi}
  Let $G$ be a finitely generated pro\nobreakdash-$p$ group, and let $L$ be a
  $\Z_p$-lattice equipped with a right $G$-action.  Suppose
  that the lower $p$-series of $L$ admits a stratification, with the
  frame $(x_1, \ldots, x_d)$ and growth rate $(\xi_1, \ldots, \xi_d)$.

  Suppose that $e \in \{1,\ldots,d-1\}$ is such that
  $\xi_e < \xi_{e+1}$.  Then the $\Z_p$-lattice
  $M = \bigoplus_{k=1}^e \Z_p x_k$ is $G$-invariant.  Moreover
  the lower $p$-series of $M$ has a stratification with frame
  $(x_1,\ldots,x_e)$ and growth rate $(\xi_1,\ldots,\xi_e)$.
\end{proposition}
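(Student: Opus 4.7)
The plan is to establish the two assertions in succession, with the second resting on an Artin--Rees type argument.

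For the $G$-invariance of $M$, I would exhibit $M$ as an intersection of explicit $G$-invariant open sublattices.  Fix $c \in \N_0$ witnessing the $c$-equivalence of the lower $p$-series with $L_i^* = \bigoplus_{k=1}^d p^{\lfloor i\xi_k\rfloor}\Z_p x_k$, so that $p^c L_i^* \subseteq \lambda_i(L) \subseteq p^{-c} L_i^*$ for all $i \in \N_0$.  For each sufficiently large $j$ define
\[
  T_j = \bigl\{ v \in L : p^{\lfloor j\xi_e\rfloor + c}\, v \in \lambda_j(L) \bigr\}.
\]
Each $T_j$ is a $\Z_pG$-submodule of $L$, since $\lambda_j(L)$ is $G$-invariant and scalar multiplication commutes with the $G$-action.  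Writing $v = \sum_k a_k x_k$ and using $\xi_k \le \xi_e$ for $k \le e$, respectively $\xi_k > \xi_e$ for $k > e$, a direct coefficient computation gives $M \subseteq T_j$ and $T_j \subseteq M + \bigoplus_{k>e} p^{\lfloor j\xi_k\rfloor - \lfloor j\xi_e\rfloor - 2c}\Z_p x_k$.  Because the latter error term shrinks to $\{0\}$ as $j \to \infty$, we conclude $M = \bigcap_j T_j$, and hence $M$ is $G$-invariant.

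For the stratification of the lower $p$-series of $M$, the analogous coefficient computation on $L/M$ first shows that $\lambda_i(L/M) = (\lambda_i(L)+M)/M$ is $c$-equivalent to $\bigoplus_{k>e} p^{\lfloor i\xi_k\rfloor}\Z_p \bar x_k$, so $L/M$ carries a stratification with growth rate $(\xi_{e+1},\ldots,\xi_d)$.  Combining this with the stratification of $L$ via the short exact sequence $0 \to M/(M\cap\lambda_i(L)) \to L/\lambda_i(L) \to (L/M)/\lambda_i(L/M) \to 0$ yields that $\log_p|M : M \cap \lambda_i(L)|$ equals $\sum_{k=1}^e \lfloor i\xi_k\rfloor$ up to a uniformly bounded error.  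The trivial inclusion $\lambda_i(M) \subseteq M \cap \lambda_i(L)$ then gives the lower estimate $\log_p|M : \lambda_i(M)| \ge \sum_{k=1}^e \lfloor i\xi_k\rfloor + O(1)$.

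The main obstacle is the matching upper bound, i.e.\ showing that the quotient $(M \cap \lambda_i(L))/\lambda_i(M)$ has uniformly bounded order.  To address this I would invoke the Artin--Rees lemma.  The $G$-action factors through the image $\bar G \le \GL_d(\Z_p)$, which is closed in the $p$-adic analytic group $\GL_d(\Z_p)$ and hence itself $p$-adic analytic, so by Lazard the completed group ring $\Z_p[\![\bar G]\!]$ is Noetherian.  Applied to the finitely generated module $L$, its submodule $M$ and the ideal $\aug_{\bar G}$, Artin--Rees produces a constant $k \in \N_0$ such that $\lambda_{i+k}(L) \cap M \subseteq \lambda_i(M)$ for every $i \in \N_0$.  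Combined with the count above at level $i+k$ and the estimate $\lfloor(i+k)\xi_j\rfloor - \lfloor i\xi_j\rfloor \le k$, this yields $\log_p|M : \lambda_i(M)| \le \sum_{k=1}^e \lfloor i\xi_k\rfloor + O(1)$.  The matching bounds, sharpened via the sandwich $p^c M_i^* \subseteq p^c L_i^* \cap M \subseteq \lambda_i(L) \cap M$ and the reverse direction supplied by Artin--Rees, upgrade to $c'$-equivalence of $\lambda_i(M)$ with $M_i^* = \bigoplus_{k=1}^e p^{\lfloor i\xi_k\rfloor}\Z_p x_k$, giving the stratification with frame $(x_1,\ldots,x_e)$ and growth rate $(\xi_1,\ldots,\xi_e)$.
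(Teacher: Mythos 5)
Your argument for the $G$-invariance of $M$ is correct and takes a genuinely different route from the paper: you exhibit $M$ as the intersection $\bigcap_j T_j$ of explicitly $G$-invariant open sublattices $T_j = \{\, v \in L : p^{\lfloor j\xi_e\rfloor + c}\,v \in \lambda_j(L)\,\}$, whereas the paper argues by contradiction, observing that a hypothetical element $y = \sum_{k\le e} x_k\alpha_k \in N\smallsetminus p^{c_2}N$ would force $\lfloor i\xi_e\rfloor + c_1 + c_2 > \lfloor i\xi_{e+1}\rfloor - c_1$ for all $i$, contradicting $\xi_e < \xi_{e+1}$. Both are short; your intersection trick is arguably cleaner since it does not even need the sublattice $N$.

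The second half has a genuine gap. You deduce a constant $k$ with $\lambda_{i+k}(L)\cap M \subseteq \lambda_i(M)$ from ``$\Z_p[\![\bar G]\!]$ is Noetherian, hence Artin--Rees''. That implication is false for noncommutative rings: a two-sided ideal in a noncommutative Noetherian ring need not have the Artin--Rees property. What \emph{is} true is that the $p$-augmentation ideal of the Iwasawa algebra of a compact $p$-adic analytic pro\nobreakdash-$p$ group has the Artin--Rees property, but this rests on the Zariskian structure of the $\aug$-adic filtration (the associated graded ring is Noetherian) or on the comparison-of-good-filtrations theorem, not on Noetherianity alone; you would need to cite or prove that. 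Once Artin--Rees is actually granted, your final chain $p^{c+k}M_i^* \subseteq p^cM_{i+k}^* \subseteq \lambda_{i+k}(L)\cap M \subseteq \lambda_i(M)$ is correct and gives the desired $c'$-equivalence. The paper sidesteps the issue entirely with an elementary estimate: using the projection $\tau\colon L\to M$ along $N$ and the containment $p^{b_1}M_{i-b_1}\subseteq M_i$ (from $p\in\aug_G$), it bounds $L_i\tau$ inside $M_{i-b_1}$ plus a correction term that the inequality $\lfloor b_1\xi_{e+1}\rfloor + \lfloor(i-b_1)\xi_k\rfloor - 2c_1 \ge \lfloor i\xi_k\rfloor + c_1 + 1$ renders harmless. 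So the approach can be made to work, but as written the Artin--Rees step is unjustified, and the paper's explicit route is more self-contained.
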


\begin{proof}
  The $\Z_p$-lattice $L$ decomposes as the direct sum of its
  sublattices $M$ and $N = \bigoplus_{k=e+1}^d \Z_p x_k$.
  Choose $c_1 \in \N$ such that the lower $p$-series
  $L_i = \lambda_i(L)$, $ i \in \N_0$, is $c_1$-equivalent to
  the series
  $\bigoplus_{k=1}^d p^{\lfloor i \xi_k \rfloor} \Z_p x_k$,
  $i \in \N_0$.  Then $\xi_{e+1} = \min \{ \xi_{e+1},
    \ldots, \xi_d \}$ implies
  \begin{equation} \label{equ:Li-cap-N-small}
    L_i \cap N  \subseteq \left( \bigoplus_{k=1}^d p^{\lfloor i
        \xi_k \rfloor - c_1} \Z_p x_k \right) \cap N \subseteq
  p^{\lfloor i \xi_{e+1} \rfloor -c_1} 
    N \quad \text{for all $i \in \N_0$,}
  \end{equation}
  where we embed $L$ into $\Q_p \otimes_{\Z_p} L$ to interpret
  negative exponents occurring for small values of~$i$.

  For a contradiction, assume that $M$ is not $G$-invariant, i.e.\ not
  a $\Z_pG$-submodule.  Then there are
    $\alpha_1, \ldots, \alpha_e \in \Z_pG$ such that
    $y = \sum_{k=1}^e x_k \alpha_k \in N \smallsetminus \{1\}$.
    Choose $c_2 \in \N$ such that $y \in N \smallsetminus p^{c_2}N$.
  For every $i \in \N$, we  deduce from
    $\xi_e = \max \{ \xi_1, \ldots, \xi_e \}$ that
  $p^{\lfloor i \xi_e \rfloor + c_1}M \subseteq L_i$ and hence
  \[
    p^{\lfloor i \xi_e \rfloor + c_1} y \in (L_i \cap N)
    \smallsetminus p^{\lfloor i \xi_e \rfloor + c_1 + c_2}
    N.
  \]
  Thus \eqref{equ:Li-cap-N-small} yields
  $\lfloor i \xi_e \rfloor + c_1 + c_2 > \lfloor i \xi_{e+1} \rfloor
  -c_1$.  Since $\xi_e < \xi_{e+1}$ this yields the desired
  contradiction for sufficiently large $i \in \N$.

  \smallskip

  It remains to prove that the lower $p$-series $M_i = \lambda_i(M)$,
  $i \in \N_0$, of $M$ has a stratification with the frame
  $(x_1,\ldots,x_e)$ and growth rate $(\xi_1,\ldots,\xi_e)$.  As
  before we embed $L$ into $\Q_p \otimes_{\Z_p} L$ to interpret any
  negative powers of~$p$, whose occurrence does not cause
    any serious problems.  We put
  \[
    b = c_1 + b_1 \qquad \text{with} \qquad b_1 = \left\lceil
      \frac{3c_1 + 3}{\xi_{e+1}-\xi_e} \right\rceil.
  \]
  From the choice of $c_1$ and from $b \ge c_1$ we deduce
    that, for all $i \in \N_0$,
  \[
    M_i \subseteq L_i \cap M \subseteq \left(
      \bigoplus_{k=1}^d p^{\lfloor i \xi_k\rfloor - c_1}
      \Z_p \, x_k \right) \cap M \subseteq p^{-b}
    \,\bigoplus_{k=1}^e p^{\lfloor i \xi_k\rfloor}
    \Z_p \, x_k.
  \]
   Thus it suffices to show that, for all $i \in \N_0$ with
  $i \ge b_1$,
  \[
   p^b \, \bigoplus_{k=1}^e p^{\lfloor i \xi_k\rfloor}
    \Z_p \, x_k  \subseteq M_i.
  \]
  
  Let $i \in \N_0$ with $i \ge b_1$.  From $p \in \aug_G$ it
  follows that
  $p^{b_1} M_{i - b_1} \subseteq M_{i-b_1} . \aug_G^{\,
    b_1} = M_i$, and it suffices to establish that
  $p^{c_1} \bigoplus_{k=1}^e p^{\lfloor i \xi_k\rfloor}
  \Z_p \, x_k \subseteq M_{i - b_1}$.  There is no harm in
  working modulo $p \big( p^{c_1} \bigoplus_{k=1}^e p^{\lfloor i \xi_k\rfloor}
  \Z_p \, x_k \big)$ and hence it is enough to show that
  \[
    \bigoplus_{k=1}^e p^{\lfloor i \xi_k\rfloor + c_1}
    \Z_p \, x_k \subseteq M_{i - b_1} +
    \bigoplus_{k=1}^e p^{\lfloor i \xi_k\rfloor + c_1 + 1}
    \Z_p \, x_k.
  \]
  Recall that $L = M \oplus N$ as a $\Z_p$-lattice and let
  $\tau \colon L \to M$ denote the projection along~$N$.  Recall
  further that
  $\bigoplus_{k=1}^e p^{\lfloor i \xi_k\rfloor + c_1}
  \Z_p \, x_k \subseteq L_i$.  Hence it is enough to show that
  \begin{equation} \label{equ:L-i-tau-small} L_i \tau \subseteq M_{i -
      b_1} + \bigoplus_{k=1}^e p^{\lfloor i \xi_k\rfloor +
      c_1 + 1} \Z_p \, x_k.
  \end{equation}
  From
  $L_{b_1} \subseteq M + p^{\lfloor b_1 \xi_{e+1} \rfloor - c_1}N$ we
  conclude that
  \[
    L_i \tau = \big( L_{b_1} . \aug_G^{\, i - b_1} \big) \tau \subseteq
    \big( \underbrace{M . \aug_G^{\, i - b_1}}_{=M_{i - b_1}} +
    p^{\lfloor b_1 \xi_{e+1} \rfloor - c_1} \underbrace{N . \aug_G^{\, i
        - b_1}}_{\subseteq L_{i - b_1}} \big) \tau,
  \]
  and hence
    $L_{i-b_1} \subseteq \bigoplus_{k=1}^d p^{\lfloor (i-b_1) \xi_k
      \rfloor - c_1}\Z_px_k$ gives
  \begin{align*}
    L_i \tau %
    & \subseteq M_{i - b_1} + \big( p^{\lfloor b_1 \xi_{e+1} \rfloor -
      c_1} L_{i - b_1} \big)\tau \\
    & \subseteq M_{i-b_1} +
      \bigoplus_{k=1}^e p^{\lfloor b_1 \xi_{e+1} \rfloor +
      \lfloor (i-b_1) \xi_k \rfloor - 2c_1} \Z_p \, x_k.
  \end{align*}
  Now our choice of $b_1$ guarantees that, for each
  $k \in \{1,\ldots,e\}$,
  \begin{multline*}
    \lfloor b_1 \xi_{e+1} \rfloor + \lfloor (i-b_1) \xi_k \rfloor -
    2c_1 %
    \ge b_1 \xi_{e+1} + (i-b_1) \xi_k - 2c_1 -  2\\
    = i\xi_k + c_1 + b_1 (\xi_{e+1}-\xi_k) -(3c_1 + 2) \ge \lfloor
    i\xi_k\rfloor + c_1 + 1,
  \end{multline*}
  and this yields \eqref{equ:L-i-tau-small}.
\end{proof}

\begin{corollary} \label{cor:stratif-by-G-inv-lattices} Let $G$ be a
  finitely generated pro\nobreakdash-$p$ group, and let $L$ be a
  $\Z_p$-lattice equipped with a right $G$-action.  Then every
  stratification of the lower $p$-series of $L$ is given by
  $G$-invariant sublattices.
\end{corollary}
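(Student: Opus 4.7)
The plan is to combine Proposition~\ref{pro:stratification-restricts-to-small-xi} with a short manipulation of the floor exponents $\lfloor i \xi_k \rfloor$, grouping the basis elements according to the distinct values of the growth rates. First I would write $\eta_1 < \eta_2 < \ldots < \eta_s$ for the distinct values taken by $\xi_1 \le \ldots \le \xi_d$, and set $0 = d_0 < d_1 < \ldots < d_s = d$ so that $\xi_k = \eta_t$ precisely when $d_{t-1} < k \le d_t$. Repeated application of Proposition~\ref{pro:stratification-restricts-to-small-xi}, with $e = d_t$ for $t \in \{1,\ldots,s-1\}$ (noting that $\xi_{d_t} = \eta_t < \eta_{t+1} = \xi_{d_t+1}$), shows that each partial sublattice $M_t = \bigoplus_{k=1}^{d_t} \Z_p x_k$ is $G$-invariant; the top term $M_s = L$ is automatic.

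The second step is to verify the identity
\[
  L_i^* = \sum_{t=1}^s p^{\lfloor i \eta_t \rfloor} M_t \qquad \text{for every } i \in \N_0.
\]
Since $\eta_1 < \ldots < \eta_s$ and $\lfloor \cdot \rfloor$ is monotone, the exponents $\lfloor i \eta_t \rfloor$ are non-decreasing in~$t$; hence for each basis element $x_k$ the contribution of the right-hand side inside $\Q_p x_k$ is the $\Z_p$-submodule generated by $p^{\lfloor i \eta_{t(k)} \rfloor} x_k$, where $t(k)$ is the unique index with $d_{t(k)-1} < k \le d_{t(k)}$, that is, $\xi_k = \eta_{t(k)}$. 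Summing over $k$ recovers $L_i^* = \bigoplus_{k=1}^d p^{\lfloor i \xi_k \rfloor} \Z_p x_k$. Since each summand $p^{\lfloor i \eta_t \rfloor} M_t$ on the right is a $G$-invariant $\Z_p$-sublattice of $L$, so is their sum $L_i^*$. I do not anticipate any real obstacle: the substantive content has already been packaged into Proposition~\ref{pro:stratification-restricts-to-small-xi}, and what remains is a bookkeeping check whose only subtle ingredient is the monotonicity of $\lfloor i \cdot \rfloor$, which makes the minimum-exponent reasoning in the identity valid uniformly in~$i$.
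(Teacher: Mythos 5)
Your argument is correct and follows essentially the same route as the paper: both group the frame vectors by the distinct growth-rate values and then invoke Proposition~\ref{pro:stratification-restricts-to-small-xi} to obtain $G$-invariance of the partial sublattices. The only difference is cosmetic: the paper checks $G$-invariance of the stratification terms by verifying closure under the $p$-augmentation ideal (using $N_s.\aug_G \subseteq N_1 \oplus \ldots \oplus N_s$ together with monotonicity of the exponents), whereas you exhibit each term directly as the sum $\sum_t p^{\lfloor i\eta_t\rfloor} M_t$ of manifestly $G$-invariant sublattices and then use monotonicity to identify that sum with $L_i^*$ — a slightly more transparent packaging of the same bookkeeping.
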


\begin{proof}
  Suppose that the lower $p$-series of $L$ admits a stratification,
  with the frame $(x_1, \ldots, x_d)$ and growth rate
  $(\xi_1, \ldots, \xi_d)$.  There are
  $\vartheta_1 < \ldots < \vartheta_r$, for some
  $r \in \{1,\ldots,d\}$, such that
  $\{\xi_k \mid 1 \le k \le d \} = \{ \vartheta_s \mid 1 \le s \le r
  \}$.  For $1 \le s \le r$ let $d_s$ denote the multiplicity of
  $\vartheta_s$ in $(\xi_1, \ldots, \xi_d)$, and for $0 \le s \le r$
  put $D(s) = d_1 + \ldots + d_s$.  Then the $\Z_p$-lattice $L$
  decomposes as
  \[
    L = N_1 \oplus \ldots \oplus N_r, \quad \text{where} \quad N_s =
    \bigoplus_{k = D(s-1)+1}^{D(s)}
    \Z_p x_k \quad \text{for $1 \le s \le r$,}
  \]
  and our stratification of the lower $p$-series of $L$ is realised by
  the series
  \begin{equation} \label{equ:series-in-Ns}
    \bigoplus_{s=1}^r p^{\lfloor i \vartheta_s \rfloor} N_s,
    \quad i \in \N_0.
  \end{equation}
  For each $s \in \{1, \ldots, r\}$,
  Proposition~\ref{pro:stratification-restricts-to-small-xi} yields
  $N_s .\aug_G \subseteq N_1 \oplus \ldots \oplus N_s$, because the
  $\Z_p$-lattice $N_1 \oplus \ldots \oplus N_s$ is
  $G$-invariant.  Recalling $\vartheta_1 < \ldots < \vartheta_r$ we
  deduce that for every $i \in \N_0$,
  \begin{multline*}
    \Big( \bigoplus_{s=1}^r p^{\lfloor i \vartheta_s \rfloor}
    N_s \Big) .\aug_G = \bigoplus_{s=1}^r p^{\lfloor i
      \vartheta_s \rfloor}
    \big( N_s .\aug_G \big) \\
    \subseteq \bigoplus_{s=1}^r p^{\lfloor i \vartheta_s
      \rfloor} \big( N_1 \oplus \ldots \oplus N_s \big) =
    \bigoplus_{s=1}^r p^{\lfloor i \vartheta_s \rfloor} N_s.
  \end{multline*}
  Hence all terms of the series \eqref{equ:series-in-Ns} are
  $G$-invariant.
\end{proof}

\begin{lemma} \label{lem:commensurable-OK} Let $G$ be a finitely
  generated pro\nobreakdash-$p$ group, and let $L, \tilde L$ be
  $\Z_p$-lattices equipped with $G$-actions such that
  $\Q_p \otimes_{\Z_p} L \cong \Q_p \otimes_{\Z_p} \tilde L$ as
  $\Q_pG$-modules.  Put $d = \dim_{\Z_p} L = \dim_{\Z_p} \tilde L$.
  If the lower $p$-series of $L$ admits a stratification with growth
  rate $(\xi_1, \ldots, \xi_d)$ then so does the lower $p$-series of
  $\tilde L$.
\end{lemma}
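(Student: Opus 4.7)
The plan is to embed $L$ and $\tilde L$ as commensurable full-rank lattices in the common ambient vector space $V = \Q_p \otimes_{\Z_p} L = \Q_p \otimes_{\Z_p} \tilde L$ and then to transfer the stratification of $L$ directly to $\tilde L$ by a careful linear-algebra comparison. First I would fix $c_0 \in \N_0$ with $p^{c_0} L \subseteq \tilde L$ and $p^{c_0} \tilde L \subseteq L$; applying the operator $\aug_G^{\, i}$ to these inclusions immediately yields $p^{c_0} \lambda_i(L) \subseteq \lambda_i(\tilde L) \subseteq p^{-c_0} \lambda_i(L)$ for every $i \in \N_0$, so the two lower $p$-series are $c_0$-commensurable in $V$, uniformly in $i$.

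Next I would transfer the flag structure. Let $\vartheta_1 < \ldots < \vartheta_r$ be the distinct values among $\xi_1, \ldots, \xi_d$ with multiplicities $d_s$ and put $D(s) = d_1 + \ldots + d_s$. By Corollary~\ref{cor:stratif-by-G-inv-lattices} the sublattices $L^{(s)} = \bigoplus_{k \le D(s)} \Z_p x_k$ are $G$-invariant, and their saturated counterparts $\tilde L^{(s)} := (\Q_p L^{(s)}) \cap \tilde L$ are $G$-invariant sublattices of $\tilde L$ satisfying $p^{c_0} L^{(s)} \subseteq \tilde L^{(s)} \subseteq p^{-c_0} L^{(s)}$ inside $\Q_p L^{(s)}$, using that $L^{(s)} = L \cap \Q_p L^{(s)}$ is saturated in~$L$. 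A standard lifting argument then produces a $\Z_p$-basis $(\tilde x_1, \ldots, \tilde x_d)$ of $\tilde L$ refining the flag, so that $\tilde L^{(s)} = \bigoplus_{k \le D(s)} \Z_p \tilde x_k$ for each~$s$.

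The heart of the argument is a direct comparison of $\Lambda_i := \bigoplus_k p^{\lfloor i \xi_k \rfloor} \Z_p x_k$ with $\tilde \Lambda_i := \bigoplus_k p^{\lfloor i \xi_k \rfloor} \Z_p \tilde x_k$. Writing $\tilde x_k = \sum_j T_{jk} x_j$, the matrix $T \in \GL_d(\Q_p)$ is block-upper-triangular with respect to the flag, and both $T$ and $T^{-1}$ have all entries in $p^{-c_0} \Z_p$. With $D_i = \mathrm{diag}(p^{\lfloor i\xi_1 \rfloor}, \ldots, p^{\lfloor i\xi_d \rfloor})$, the $(j,k)$-entry of $D_i^{-1} T^{\pm 1} D_i$ equals $p^{\lfloor i \xi_k \rfloor - \lfloor i \xi_j \rfloor}$ times the $(j,k)$-entry of $T^{\pm 1}$; the block-upper-triangular structure forces every nonzero entry to occur at a position with $\xi_j \le \xi_k$, so the exponent is at least $-1$, and hence $D_i^{-1} T^{\pm 1} D_i \in p^{-(c_0 + 1)} \Mat_d(\Z_p)$. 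This shows that $\tilde \Lambda_i$ and $\Lambda_i$ are $(c_0 + 1)$-equivalent in $V$, uniformly in $i$. Chaining this with the commensurability of $\lambda_i(\tilde L)$ and $\lambda_i(L)$ in $V$ and the built-in equivalence $\lambda_i(L) \sim \Lambda_i$ supplied by the stratification of $L$, I obtain that $\lambda_i(\tilde L)$ and $\tilde \Lambda_i$ are equivalent as filtration series of~$\tilde L$, so $(\tilde x_1, \ldots, \tilde x_d)$ is a frame for a stratification of the lower $p$-series of $\tilde L$ with growth rate $(\xi_1, \ldots, \xi_d)$.

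The main obstacle is the uniform $i$-independent bound on $D_i^{-1} T^{\pm 1} D_i$. It succeeds precisely because the ordering $\xi_1 \le \ldots \le \xi_d$ has been arranged so that the change-of-basis $T$ between the two flag-compatible frames is block-upper-triangular, and conjugation by the growing diagonal $D_i$ therefore never introduces unbounded negative powers of $p$. It is this interplay between the monotonicity of the growth rates and the saturation of the $G$-invariant flag inside $\tilde L$ that keeps the equivalence constant uniformly bounded in~$i$.
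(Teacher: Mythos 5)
Your proof is correct and follows essentially the same strategy as the paper: after making the two lower $p$-series commensurable with a uniform constant, you choose a flag-adapted basis of~$\tilde L$ whose change-of-basis matrix to the frame $(x_k)$ is (block-)upper-triangular with respect to the ordering $\xi_1 \le \ldots \le \xi_d$, so that conjugation by the diagonal growth matrix $D_i$ remains uniformly bounded -- the paper realises exactly this idea via an explicit Hermite-type basis $(y_e)$ with $\bigoplus_{k\le e}\Z_p y_k = \tilde L \cap \bigoplus_{k\le e}\Z_p x_k$. Two small corrections: the $G$-invariance of the partial flag $\bigoplus_{k\le D(s)}\Z_p x_k$ is the content of Proposition~\ref{pro:stratification-restricts-to-small-xi} (equivalently Theorem~\ref{thm:stratification-exists}(ii)), not Corollary~\ref{cor:stratif-by-G-inv-lattices}, and in any case it is not needed here, since once $\lambda_i(L)$ and $\lambda_i(\tilde L)$ are commensurable the rest of the argument is purely lattice-theoretic; also, because $\xi_j\le\xi_k$ forces $\lfloor i\xi_j\rfloor\le\lfloor i\xi_k\rfloor$, your exponent $\lfloor i\xi_k\rfloor - \lfloor i\xi_j\rfloor$ is in fact $\ge 0$, not merely $\ge -1$, so the sharper constant $c_0$ (without the $+1$) already suffices.
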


\begin{proof}
  Without loss of generality, $L$ and $\tilde L$ are $G$-invariant
  full $\Z_p$-lattices in one and the same
  $\Q_pG$-module $V = \Q_p L = \Q_p \tilde L$
  and $d = \dim_{\Q_p}V$.  Then there exists
  $c_1 \in \N$ such that $\tilde L \subseteq p^{-c_1}L$.
  Since $p^{-c_1}L$ and $L$ are isomorphic as $\Z_pG$-modules,
  we may assume without loss of generality that
  $\tilde L \subseteq L$.

  Furthermore, there is $c_2 \in \N$ such that
  $p^{c_2} L \subseteq \tilde L$.  This implies that
  $p^{c_2} \lambda_i(L) \subseteq \lambda_i(\tilde L) \subseteq
  \lambda_i(L)$ for all $i \in \N$.  Let $(x_1,\ldots,x_d)$ be
  the frame for a stratification of the lower $p$-series of $L$, with
  growth rate $(\xi_1, \ldots, \xi_d)$.  Then there is
  $c_3 \in \N$ such that, for $i \in \N$,
  \[
    \bigoplus_{k=1}^d p^{\lfloor i \xi_k \rfloor +c_3}
    \Z_p x_k \subseteq  \lambda_i(L)  \subseteq
    \bigoplus_{k=1}^d p^{\lfloor i \xi_k \rfloor -c_3}
    \Z_p x_k.
  \]
  Below we produce a $\Z_p$-basis $y_1,\ldots,y_d$ for
  $\tilde L$ and $c_4 \in \N$ such that
  \begin{equation} \label{equ:bounds-y-frame}
    \bigoplus_{k=1}^d p^{\lfloor i \xi_k \rfloor +c_4}
    \Z_p x_k \subseteq \bigoplus_{k=1}^d p^{\lfloor i
      \xi_k \rfloor} \Z_p y_k \subseteq
    \bigoplus_{k=1}^d p^{\lfloor i \xi_k \rfloor}
    \Z_p x_k.
  \end{equation}
  With $c = c_2+c_3+c_4$, this yields for all $i \in \N_0$,
  \[
    \bigoplus_{k=1}^d p^{\lfloor i \xi_k \rfloor +c}
    \Z_p y_k \subseteq \lambda_i(\tilde L) \subseteq
    \bigoplus_{k=1}^d p^{\lfloor i \xi_k \rfloor - c}
    \Z_p y_k.
  \]
  Thus the lower $p$-series of $\tilde L$ admits a stratification with
  frame $(y_1,\ldots,y_d)$ and growth rate $(\xi_1,\ldots,\xi_d)$.

  \smallskip

  It remains to establish \eqref{equ:bounds-y-frame} for suitable
  $y_1,\ldots,y_d$ and $c_4$.  For each $e \in \{1, \ldots, d\}$ we
  choose
  \[
    y_e = a_{e,1} x_1 + \ldots + a_{e,e-1} x_{e-1} + p^{b_e} x_e \in
    \tilde L,
  \]
  where $a_{e,1}, \ldots, a_{e,e-1} \in \Z_p$ denote suitable
  coefficients and $b_e \in \{0,1, \ldots, c_2 \}$, in such a way that
  $\bigoplus_{k=1}^e \Z_p y_k = \tilde L \cap
  \bigoplus_{k=1}^e \Z_p x_k$.  We observe that
  $y_1, \ldots, y_d$ do form a $\Z_p$-basis for $\tilde L$ and
  that $b_1 + \ldots + b_d = \log_p \lvert L : \tilde L \rvert$.
  Furthermore, we see that for each $i \in \N$ and each $e \in
  \{1, \ldots, d \}$,
    \[
    p^{\lfloor i \xi_e \rfloor} y_e = \left( \sum_{k=1}^{e-1} \big( p^{\lfloor i
      \xi_e \rfloor - \lfloor i \xi_k \rfloor} a_{e,k} \big) \big( p^{\lfloor i
      \xi_k \rfloor} x_k \big) \right) + p^{b_e} \big( p^{\lfloor i
    \xi_e \rfloor} x_e \big).
  \]
  This implies that
  \[
    \bigoplus_{k=1}^d p^{\lfloor i \xi_k \rfloor}
    \Z_p y_k \subseteq \bigoplus_{k=1}^d p^{\lfloor i \xi_k
      \rfloor} \Z_p x_k
  \]
  with
  \[
    \log_p \left\lvert \bigoplus_{k=1}^d p^{\lfloor i \xi_k
        \rfloor} \Z_p x_k : \bigoplus_{k=1}^d
      p^{\lfloor i \xi_k \rfloor} \Z_p y_k \right\rvert = b_1 +
    \ldots + b_d = \log_p \lvert L : \tilde L \rvert.
  \]
  Hence \eqref{equ:bounds-y-frame} holds for
  $c_4 = \log_p \lvert L : \tilde L \rvert$.
\end{proof}

We are now in a position to prove
Theorem~\ref{thm:stratification-exists}, but it is worth to comment
first on a special case.

  \begin{remark}\label{rem:completely-reducible}
    Proposition~\ref{pro:simple} and Lemma~\ref{lem:commensurable-OK}
    already yield a direct proof of
    Theorem~\ref{thm:stratification-exists} in the special case where
    $\Q_p \otimes_{\Z_p} L$ is a completely reducible $\Q_p G$-module.
    In this context we recall the following fact: if the pro-$p$ group
    $G$ is an open compact subgroup of a semisimple $p$-adic group,
    i.e.\ the group $\mathbf{G}(\Q_p)$ of $\Q_p$-rational points of a
    semisimple linear algebraic group $\mathbf{G}$ over $\Q_p$, then
    every $\Q_pG$-module $V$ with $\dim_{\Q_p}(V) < \infty$ is
    completely reducible.  Indeed, for every open subgroup $H$ of $G$,
    the $\Q_pG$-module $V$ is completely reducible if and only if it
    is completely reducible as a $\Q_pH$-module, essentially by the
    standard Clifford theorem and Maschke theorem.  Hence, by passing to
    an open subgroup of~$G$, we may assume that $G$ is uniformly
    powerful and form the $\Q_p$-Lie algebra
    $\mathcal{L}_G = \Q_p \otimes_{\Z_p} \log(G)$ associated to~$G$.
    In this set-up, it is easily seen that $\Q_p G$-submodules of $V$
    are the same as $\mathcal{L}_G$-submodules of~$V$.  Since the Lie
    algebra $\mathcal{L}_G$ is semisimple of characteristic zero,
    Weyl's theorem shows that $V$ is completely reducible.
  \end{remark}

\begin{proof}[Proof of Theorem~\ref{thm:stratification-exists}]
  Let $L$ be a $\Z_p$-lattice equipped with a $G$-action, for a
  finitely generated pro\nobreakdash-$p$ group $G$.  In view of
  Lemma~\ref{lem:same-growth-rates},
  Proposition~\ref{pro:stratification-restricts-to-small-xi} and
  Corollary~\ref{cor:stratif-by-G-inv-lattices}, it remains to prove
  that the lower $p$-series of the $\Z_pG$-module $L$ admits a
  stratification with rational growth rates in
    $[\nicefrac{1}{d},1]$, where $d = \dim_{\Z_p}L$.

  \smallskip
  
  We argue by induction on $d$.  Of course, the claim holds trivially
  if $L = \{0\}$, and we now suppose that $d \ge 1$.  By
    intersecting a simple $\Q_p G$-submodule of
    $\Q_p \otimes_{\Z_p} L$ with the original lattice~$L$, we obtain
    a $\Z_pG$-submodule $N$ of~$L$ such that $\Q_p \otimes_{\Z_p} N$
    is a simple $\Q_pG$-module.  Choose $n \in \N$ such that
    $p^{n+1} L \cap N \subseteq pN$.  Then $\tilde L = p^n L + N$ is a
    $G$-invariant open $\Z_p$-sublattice of~$L$, and $N$ is a direct
    summand of the $\Z_p$-lattice $\tilde L $.  By
    Lemma~\ref{lem:commensurable-OK}, there is no harm in replacing
    $L$ with $\tilde L$ and we may thus suppose that $L$ itself
    decomposes as $L = M \oplus N$ for a $\Z_p$-sublattice $M$.

  Suppose that we can arrange for $M$ to be $G$-invariant.  Then $L$
  is a direct sum $M \oplus N$ of $\Z_pG$-modules and its lower
  $p$-series decomposes accordingly:
  $\lambda_i(L) = \lambda_i(M) \oplus \lambda_i(N)$ for $i \in \N_0$.
  If both $\dim_{\Z_p} M$ and $\dim_{\Z_p} N$ are less than~$d$, our
  claim follows by induction, while in the remaining case, $L = N$ and
  $M = \{0\}$, the proof can be finished by appealing to
  Proposition~\ref{pro:simple}.   Using
    Lemma~\ref{lem:commensurable-OK}, the same argument can be applied
    if $N$ admits a $G$-invariant almost complement $M'$ in $L$, i.e.\
    a $G$-invariant $\Z_p$-sublattice $M'$ which complements $N$ to
    yield an open sublattice $M' \oplus N$ of $L$.

 Now suppose that $N$ has no $G$-invariant almost complement in $L$.
  Let us fix some additional notation.  We write
  \[
    L_i = \lambda_i(L), \quad i \in \N_0,
  \]
  for the lower $p$-series.  Put $e = \dim_{\Z_p} M$.  By
  Proposition~\ref{pro:simple} there is
  $\eta \in [\nicefrac{1}{(d-e)},1] \cap \Q$ such that the lower
  $p$-series of $N$ is equivalent to the series
  $p^{\lfloor i \eta \rfloor} N$, $i \in \N_0$.  By induction on the
  dimension, the lower $p$-series of the $\Z_pG$-module $L/N$, which
  is $(L_i+N)/N$, $i \in \N$, admits a stratification, based on a
  frame $(\mathbf{x}_1,\ldots, \mathbf{x}_e)$, with growth rates
  $\xi_1 \le \ldots \le \xi_e$ in $[\nicefrac{1}{e},1] \cap \Q$.  Set
  $\xi_0 = 0$ for notational convenience.

  We choose representatives $x_1, \ldots, x_e \in M$ for the cosets
  $\mathbf{x}_1,\ldots, \mathbf{x}_e \in L/N$, and pick
  $\widetilde e \in \{0,1,\ldots,e-1\}$ as large as possible subject to
  the conditions:
  \[
    \xi_{\widetilde e} < \xi_{\widetilde e + 1} \qquad \text{and} \qquad
    \tilde M = \bigoplus_{k=1}^{\widetilde e} \Z_p x_k
    \quad \text{is $G$-invariant.}
  \]
  In fact, we arrange that among all possible choices -- 
    for~$M$ among all possible almost complements for~$N$ in $L$ (with
    a subsequent reduction to $L = M \oplus N$), for the
  stratification based on the frame
  $(\mathbf{x}_1,\ldots, \mathbf{x}_e)$ and for the representatives
  $x_1, \ldots, x_e$ in $L$ -- we make one that renders $\widetilde e$ as
  large as possible.  In addition we choose
  $\widehat e \in \{ \widetilde e + 1, \ldots, e\}$ maximal with
  $\xi_{\widehat e} = \xi_{\widetilde e +1}$, and for short we put
  \[
    \vartheta = \xi_{\widetilde e +1} = \xi_{\widetilde e + 2} = \ldots =
    \xi_{\widehat e} \qquad \text{and} \qquad T =
    \bigoplus_{k=\widetilde e +1}^{\widehat e} \Z_p x_k.
  \]
  Our motivation here is that, by
  Proposition~\ref{pro:stratification-restricts-to-small-xi}, the
  $\Z_p$-lattice $\tilde M \oplus T \oplus N$ is
  $G$-invariant, even though $\tilde M \oplus T$ is not.

  \smallskip

    As on previous occasions we embed $L$ into $\Q_p \otimes_{\Z_p} L$
    to interpret terms involving negative powers of~$p$ and lying
    outside~$L$.  The next step is to show that there exists
    $b_0 \in \N$ such that
  \begin{equation} \label{equ:min-xi-eta}
    L_i \subseteq M \oplus
    p^{\min \{ \lfloor i \vartheta \rfloor, \lfloor i \eta \rfloor \}
      - b_0} N \qquad \text{for all $i \in \N_0$.}
  \end{equation}

  Indeed, the $\Z_p$-lattice $L$ clearly decomposes as a direct sum
  \begin{equation} \label{equ:M-K-N-decomposition}
    L = \tilde M \oplus K \oplus N
  \end{equation}
  of its sublattices $\tilde M$,
  $K = \bigoplus_{k= \widetilde e +1}^e \Z_p x_k$ and
  $N$.  Furthermore, the lattices $\tilde M$ and $N$ are
  $G$-invariant, whereas $K$ is not.

  Recall that $\aug_G \trianglelefteq \Z_pG$ denotes the
  $p$-augmentation ideal and yields the description
  $L_i = L . \aug_G^{\, i}$, $i \in \N_0$, of the lower
  $p$-series of~$L$, and similarly for~$N$.  According to our set-up, there
  exists $b_1 \in \N$ such that for all $i \in \N_0$,
  \[
    p^{b_1} \big( K . \aug_G^{\, i} \big) \subseteq p^{b_1} L_i \subseteq
    \tilde M \oplus p^{\lfloor i \vartheta \rfloor} K \oplus N \qquad
    \text{and} \qquad p^{b_1} \big( N . \aug_G^{\, i} \big) \subseteq
    p^{\lfloor i \eta \rfloor} N.
  \]
  By decomposing elements $y \in L$ according to
  \eqref{equ:M-K-N-decomposition} as
  $y = y_{\downharpoonleft \tilde M} + y_{\downharpoonleft K} +
  y_{\downharpoonleft N}$ into their $\tilde M$-, $K$- and $N$-parts
  and by tracing the `trajectory' of such parts as we descend along
  the lower $p$-series, we can make the inclusion recorded above more
  precise.  We choose to work modulo the $G$-invariant lattice
  $\tilde M$ and can thus focus on the $K$- and $N$-parts.  The
  challenge is to control the contributions of $K$-parts, because they
  can give rise to non-trivial $K$- and $N$-parts at the next step so
  that the trajectory splits.  In contrast, $N$-parts can only
  produce $N$-parts at the next step, because $N$ is $G$-invariant.
  At each stage along a trajectory we have, for any given $y \in L$,
  \begin{align*}
    y . \aug_G %
    & \in \tilde M \oplus (y . \aug_G)_{\downharpoonleft K}
      \oplus (y . \aug_G)_{\downharpoonleft N} \\
    & \subseteq \tilde M \oplus
      (y_{\downharpoonleft K}
      . \aug_G)_{\downharpoonleft K} \oplus (y_{\downharpoonleft K}
      . \aug_G)_{\downharpoonleft N} \oplus (y_{\downharpoonleft N}
      . \aug_G).
  \end{align*}
  We introduce a parameter~$j$, which indicates that we follow the
  $K$-parts for $j$ steps, then switch to $N$-parts and continue to
  follow the latter for the remaining $i-j-1$ steps.  For instance,
  for $i = 4$ and $j=2$, we would consider, for any given $y \in K$
  and $a_1, \ldots, a_4 \in \aug_G$, the sequence of elements
  \[
  y_0 = y, \quad y_1 = (y_0 . a_1)_{\downharpoonleft K}, \quad y_2 =
  (y_1 . a_2)_{\downharpoonleft K}, \quad y_3 = (y_2
  . a_3)_{\downharpoonleft N}, \quad y_4 = y_3 . a_4.
  \]
  Concretely, we deduce that for all $i \in \N_0$,
  \begin{align*}
    K . \aug_G^{\, i} %
    & \subseteq \tilde M \oplus (K . \aug_G^{\, i})_{\downharpoonleft
      K} \oplus \sum_{j=0}^{i-1} \bigl( \bigl( \bigl( (K . \aug_G^{\, j})_{\downharpoonleft
      K} \bigr) . \aug_G \bigr)_{\! \downharpoonleft N} \bigr) . \aug_G^{\, i-1-j} \\
    & \subseteq \tilde M \oplus p^{\lfloor i
      \vartheta \rfloor - b_1} K \oplus \sum_{j=0}^{i-1}
      p^{\lfloor j \vartheta \rfloor - b_1} N . \aug_G^{\, i-1-j} \\
    & \subseteq \tilde M \oplus p^{\lfloor i
      \vartheta \rfloor - b_1} K \oplus \sum_{j=0}^{i-1}
      p^{\lfloor j \vartheta \rfloor + \lfloor (i-1-j) \eta \rfloor- 2
      b_1} N  \\
    & \subseteq \tilde M \oplus p^{\lfloor i
      \vartheta \rfloor - b_1} K \oplus p^{ \min \{ \lfloor (i-1)
      \vartheta  \rfloor, \lfloor (i-1)\eta \rfloor \} - 2 b_1-2} N.
  \end{align*}
  This yields, for $i \in \N_0$,
  \begin{multline*}
    L_i = \tilde M . \aug_G^{\, i} + K . \aug_G^{\, i} + N
    . \aug_G^{\, i} \subseteq M + K . \aug_G^{\, i}
    + N . \aug_G^{\, i} \\
    \subseteq M + K . \aug_G^{\, i} + p^{\lfloor i \eta \rfloor - b_1}
    N \subseteq M \oplus p^{ \min \{ \lfloor i \vartheta \rfloor, \lfloor i
      \eta \rfloor \} - 2 b_1-3} N.
  \end{multline*}
  Thus \eqref{equ:min-xi-eta} holds for $b_0 = 2b_1+3$.%

  \smallskip
  
  Next we establish that there exist
  \begin{multline*}
    b, c \in \N, \quad n_i \in \N_0 \text{ with }
    \min \{ \lfloor i \vartheta \rfloor, \lfloor i \eta \rfloor \} - b
    \le n_i \le \lfloor i \eta \rfloor \\
    \text{and} \quad w_{i,k} \in p^{\min \{ \lfloor i \vartheta
      \rfloor, \lfloor i \eta \rfloor \} - b} N, \quad \text{for
      $i \in \N_0$ and $\widetilde e < k \le e$,}
  \end{multline*}
  such that the lower $p$-series $L_i$, $i \in \N_0$, is
  $c$-equivalent to the filtration series
  \begin{equation} \label{equ:first-approxiamtion-of-Li}
    \bigoplus_{k=1}^{\widetilde e} p^{\lfloor i \xi_k \rfloor}
    \Z_p x_k \;\oplus\; \bigoplus_{k = \widetilde e +1}^e
    \Z_p \big( p^{\lfloor i \xi_k \rfloor} x_k + w_{i,k} \big)
    \;\oplus\; p^{n_i} N, \quad i \in \N_0.
  \end{equation}
  
  \smallskip
  
  Indeed, our construction already incorporates a stratification of
  the lower $p$-series modulo $N$ and accordingly there exists
  $c_1 \in \N$ such that for all $i \in \N_0$,
  \[
    p^{c_1} \bigoplus_{k=1}^e  p^{\lfloor i \xi_k
      \rfloor} \Z_p x_k \subseteq L_i + N \quad \text{and} \quad
    p^{c_1} L_i \subseteq \bigoplus_{k=1}^e 
    p^{\lfloor i \xi_k \rfloor} \Z_p x_k \;\oplus\; N.
  \]
  Moreover, the $G$-invariant $\Z_p$-lattice
  $\tilde M = \bigoplus_{k=1}^{\widetilde e} \Z_p x_k$ embeds
  isomorphically as $(\tilde M + N)/N$ into $L/N$.  By
  Proposition~\ref{pro:stratification-restricts-to-small-xi}, applied
  to $L/N$, there exists $c_2 \in \N$ such that for all
  $i \in \N_0$,
 \[
   p^{c_2} \bigoplus_{k=1}^{\widetilde e}  p^{\lfloor
     i \xi_k \rfloor} \Z_p x_k \subseteq \lambda_i(\tilde M) \subseteq L_i.
 \]
 We put $c_3 = \max \{ c_1, c_2 \}$ and observe that, a fortiori, for
 all $i \in \N_0$,
 \[
   p^{c_3} p^{\lfloor i \xi_k \rfloor} x_k \in L_i \qquad \text{for $1
     \le k \le \widetilde e$.}
 \]
 Choose $j \in \N$ such that $L_j \subseteq p^{c_3}L$.  Then
 for all $i \in \N_0$ with $i \ge j$,
 \[
   p^{c_3} \bigoplus_{k=1}^e  p^{\lfloor i \xi_k
     \rfloor} \Z_p x_k \subseteq (L_i + N) \cap p^{c_3} L = L_i + (N \cap
   p^{c_3}L) = L_i + p^{c_3} N
 \]
 allows us to select
 $w_{i,\widetilde e + 1}, \ldots, w_{i,e} \in N$ such
 that
 \[
   p^{c_3} \big( p^{\lfloor i \xi_k \rfloor} x_k + w_{i,k} \big) \in
   L_i \qquad \text{for $\widetilde e < k \le e$.}
 \]
 We set $b = b_0 + c_3$ and deduce from \eqref{equ:min-xi-eta} that
 \[
   w_{i,k} \in p^{\min \{ \lfloor i \vartheta
      \rfloor, \lfloor i \eta \rfloor \} - b} N \qquad \text{for
      $i \in \N_0$ with $i \ge j$ and $\widetilde e < k \le e$.}
  \]
  For $i \in \N_0$ with $i < j$ we set
  $w_{i,\widetilde e +1} = \ldots = w_{i,e} = 0$; small
  values of $i$ play an insignificant role and this simple choice has
  no relevance beyond the fact that the series in
  \eqref{equ:first-approxiamtion-of-Li} is properly defined.  To
  streamline the notation, we also set
 \begin{equation} \label{equ:streamline-w-equal-0}
   w_{i,1} = \ldots = w_{i,\widetilde e} = 0 \qquad \text{for all
     $i \in \N_0$.}
 \end{equation}
 
 For $i \in \N_0$ with $i \ge j$ we deduce directly from our
 definitions that
 \[
   p^{c_3} \Big( \bigoplus_{k=1}^e \Z_p \big( p^{\lfloor i \xi_k
     \rfloor} x_k + w_{i,k} \big) \;\oplus\; (L_i \cap N) \Big)
   \subseteq L_i,
 \]
 and next we observe that
 \[
   p^{c_1} L_i \subseteq \bigoplus_{k=1}^e p^{\lfloor i \xi_k \rfloor}
   \Z_p x_k \;\oplus\; N = \bigoplus_{k=1}^e \Z_p \big( p^{\lfloor i
     \xi_k \rfloor} x_k + w_{i,k} \big) \;\oplus\; N
 \]
 implies
 \[
   p^{c_1+c_3} L_i = p^{c_3} \big( p^{c_1} L_i \big) \subseteq \Big(
   \bigoplus_{k=1}^e \Z_p \underbrace{p^{c_3} \big( p^{\lfloor i \xi_k
       \rfloor} x_k + w_{i,k} \big)}_{\in L_i} \Big) \;\oplus\; (L_i
   \cap N).
 \]
 Choosing $c_4 \in \N$ with $c_4 \ge c_1+c_3$ sufficiently large to
 cover also the finitely many terms indexed by $i < j$, we have
 established that $L_i$, $i \in \N_0$, is $c_4$-equivalent to the
 filtration series
 \[
   \bigoplus_{k=1}^e \Z_p \big( p^{\lfloor i \xi_k \rfloor} x_k +
   w_{i,k} \big) \;\oplus\; (L_i \cap N), \quad i \in \N_0.
 \]
 In view of \eqref{equ:min-xi-eta} and $b \ge b_0$, we deduce from our
 choice of $\eta$, in connection with the lower $p$-series of $N$,
 that there is $c_5 \in \N$ such that
 \[
   p^{\lfloor i \eta \rfloor+c_5} N \subseteq \lambda_i(N) \subseteq
 L_i \cap N \subseteq p^{\min \{ \lfloor i \vartheta \rfloor, \lfloor
   i \eta \rfloor \} - b} N \qquad \text{for $i \in \N_0$.}
 \]
 Hence Proposition~\ref{pro:rigid} shows that, for suitable $c_6 \in
 \N$ with $c_6 \ge c_5$, the filtration series
 $L_i \cap N$, $i \in \N_0$, of $N$ is $c_6$-equivalent to a
 filtration series of the form $p^{n_i} N$, $i \in \N_0$, where
 \[
   \min \{ \lfloor i \vartheta \rfloor, \lfloor i \eta \rfloor \} - b
   \le n_i \le \lfloor i \eta \rfloor \qquad \text{for all $i \in \N_0$.}
 \]
 Recalling the notational
 `nullnummer'~\eqref{equ:streamline-w-equal-0}, we deduce that, for
 $c = c_4+c_6$, the lower $p$-series $L_i$, $i \in \N_0$, is
 $c$-equivalent to the filtration series in
 \eqref{equ:first-approxiamtion-of-Li}.%

 \smallskip
 
 At this stage we are ready to finish the proof in the case that
 $ \eta \le \vartheta$.  Indeed, in this situation
 \[
   w_{i,k} \in p^{\lfloor i \eta \rfloor - b} N \quad \text{and} \quad
   \lfloor i \eta \rfloor - b \le n_i \le \lfloor i \eta \rfloor
 \]
 for all relevant indices $i,k$ and hence a small adjustment of the
 series in~\eqref{equ:first-approxiamtion-of-Li} shows that
 $L_i$, $i \in \N_0$, is $(b+c)$-equivalent to the series
 \begin{equation*}
   \bigoplus_{k=1}^{\widetilde e} p^{\lfloor i \xi_k \rfloor}
   \Z_p x_k \;\oplus\; \bigoplus_{k = \widetilde e +1}^e
   \Z_p  p^{\lfloor i \xi_k \rfloor} x_k 
   \;\oplus\; p^{\lfloor i \eta \rfloor} N, \quad i \in \N_0.
 \end{equation*}
 We choose any $\Z_p$-basis $y_1, \ldots, y_{d-e}$ for $N$.  Then
 $L_i$, $i \in \N_0$, is equivalent to
 \[
   \bigoplus_{k=1}^e p^{\lfloor i \xi_k \rfloor} \Z_p x_k \;\oplus\;
   \bigoplus_{k=1}^{d-e} p^{\lfloor i \eta \rfloor} \Z_p y_k
  \]  
  and, if $k \in \{1, \ldots, \widetilde e\}$ is minimal
  subject to $\eta \le \xi_k$, this yields a stratification with frame
  \[
    (x_1, x_2, \ldots, x_{k-1}, y_1, y_2, \ldots, y_{d-e}, x_k,
    x_{k+1}, \ldots, x_e)
  \]
  and growth rate
  \[
    (\xi_1, \xi_2, \ldots, \xi_{k-1}, \underbrace{\eta, \eta, \ldots,
      \eta}_{\text{$d-e$ entries}}, \xi_k, \xi_{k+1}, \ldots, \xi_e) \in
    \big( [\nicefrac{1}{\max\{e,d-e\}},1] \cap \Q \big)^d.
  \]

  \smallskip

  Finally, suppose that $\vartheta < \eta$.  In this situation
  \[
    w_{i,k} = p^{\lfloor i \vartheta \rfloor} v_{i,k} \quad
    \text{with} \quad v_{i,k} \in p^{- b} N \qquad \text{and} \qquad
    \lfloor i \vartheta \rfloor - b \le n_i \le \lfloor i \eta
    \rfloor.
  \]
  Below we show the following:
  there exists $m \in \N_0$ such that for every choice of
  $\mathbf{v} = (v_{\widetilde e + 1}, \ldots,
  v_{\widehat e}) \in \big(p^{-b} N
  \big)^{\widehat e - \widetilde e}$, we have
  \begin{equation} \label{equ:uniform-lower-bound-in-N} p^m N
    \subseteq \tilde M + \sum_{k = \widetilde e
      +1}^{\widehat e} (x_k + v_k) . \Z_pG.
  \end{equation}
  From this we deduce that for $i \in \N_0$,
  \begin{multline*}
    L_i \supseteq p^c \Big( p^{\lfloor i \vartheta \rfloor} \tilde
    M + \bigoplus_{k = \widetilde e +1}^{{\widehat e}} p^{\lfloor i
      \vartheta \rfloor} \Z_p (x_k + v_{i,k} ) \Big) . \Z_pG \\
    \supseteq p^{c + \lfloor i \vartheta \rfloor} \Big( \tilde M +
    \sum_{k = \widetilde e +1}^{\widehat e} (x_k +
    v_{i,k}).\Z_pG \Big) \supseteq p^{c + m + \lfloor i
      \vartheta \rfloor} N,
  \end{multline*}
  hence
  $p^{n_i} N \supseteq p^c L_i \cap N \supseteq p^{2c + m + \lfloor i
    \vartheta \rfloor} N$, and we obtain
  \[
     w_{i,k} \in p^{\lfloor i \vartheta \rfloor - b} N \quad \text{and}
     \quad \lfloor i \vartheta \rfloor -b \le n_i \le \lfloor i \vartheta
    \rfloor + 2c + m
  \]
  for all relevant indices $i,k$.  Adjusting the series
  in~\eqref{equ:first-approxiamtion-of-Li}, we deduce that $L_i$,
  $i \in \N_0$, is $(b+3c+m)$-equivalent to the series
  \begin{equation*} 
    \bigoplus_{k=1}^{\widetilde e} p^{\lfloor i \xi_k \rfloor}
    \Z_p x_k \;\oplus\; \bigoplus_{k = \widetilde e +1}^e
    p^{\lfloor i \xi_k \rfloor} \Z_p x_k 
    \;\oplus\; p^{\lfloor i \vartheta \rfloor} N, \quad i \in \N_0.
  \end{equation*}
  We choose any $\Z_p$-basis $y_1, \ldots, y_{d-e}$ for $N$.  Then
  $L_i$, $i \in \N_0$, is equivalent to
  \[
    \bigoplus_{k=1}^e p^{\lfloor i \xi_k \rfloor} \Z_p x_k \;\oplus\;
    \bigoplus_{k=1}^{d-e}  p^{\lfloor i \vartheta \rfloor} \Z_p y_k
  \]
  and this yields a stratification with frame
  \[
    (x_1, x_2, \ldots, x_{\widetilde e}, y_1, y_2, \ldots, y_{d-e},
    x_{\widetilde e +1}, x_{\widetilde e + 2}, \ldots, x_e)
  \]
  and growth rate
  \[
    (\xi_1, \xi_2, \ldots, \xi_{\widetilde e}, \underbrace{\vartheta,
      \vartheta, \ldots, \vartheta}_{\text{$d-e$ entries}}, \xi_{\widetilde
      e +1}, \xi_{\widetilde e + 2}, \ldots, \xi_e) \in \big(
    [\nicefrac{1}{\max\{e,d-e\}},1] \cap \Q \big)^d.
  \]  

  It remains to justify \eqref{equ:uniform-lower-bound-in-N}, and for
  conciseness we put
  $\mathcal{V} = \big(p^{-b} N \big)^{\widehat e -
    \widetilde e}$.

For each
$\mathbf{v} = (v_{\widetilde e +1}, \ldots,
v_{\widehat e}) \in \mathcal{V}$, we think of
$\tilde T_\mathbf{v} = \sum_{k = \widetilde e
  +1}^{\widehat e} \Z_p (x_k + v_k)$ as a deformation
of~$T$ inside $M \oplus p^{-b}N \subseteq \Q_p \otimes_{\Z_p} L$.
Recall that, by our choice of~$\widetilde e$, the
$\Z_p$-lattice $\tilde M \oplus p^b \tilde T_\mathbf{v} \subseteq L$
is not $G$-invariant, whereas
$\tilde M \oplus p^b \tilde T_\mathbf{v} \oplus N = \tilde M \oplus
p^b T \oplus N$ is $G$-invariant by
Proposition~\ref{pro:stratification-restricts-to-small-xi}.  This
implies that
  \[
    \big( \tilde M + \tilde T_\mathbf{v}.\Z_pG \big) \cap N
    \ne \{0\}.
  \]
  Recall that $\Q_p \otimes_{\Z_p} N$ is a simple $\Q_pG$-module.
  Hence the non-zero $G$-invariant $\Z_p$-lattice
  $ \big( \tilde M + \tilde T_\mathbf{v}.\Z_pG \big) \cap N$ is open
  in $N$ of lower level $l_\mathbf{v} \in \N_0$, say; 
    compare with Proposition~\ref{pro:rigid} and the paragraph above it.

  Clearly, the function $\mathcal{V} \to \N_0$,
  $\mathbf{v} \mapsto l_\mathbf{v}$ is locally constant and, since the
  domain $\mathcal{V}$ is compact, the maximum
  $m = \max \{ l_\mathbf{v} \mid \mathbf{v} \in\mathcal{V} \}$ exists
  and has the property described in
  \eqref{equ:uniform-lower-bound-in-N}.
\end{proof}

\begin{remark}\label{rem:how-to-determine-xis}
  From the proof of Theorem~\ref{thm:stratification-exists} one can
  extract an inductive procedure for determining the growth rate
  $(\xi_1, \ldots, \xi_d)$ of a stratification for the lower
  $p$-series of the $\Z_p G$-module~$L$.

  Suppose that $d = \dim_{\Z_p}(L) \ge 1$, and let $N$ be a
  $\Z_p G$-submodule of $L$ such that $\Q_p \otimes_{\Z_p} N$ is a
  simple submodule of the $\Q_p G$-module $\Q_p \otimes_{\Z_p} L$.  By
  Proposition~\ref{pro:simple} the growth rate of a stratification for
  the lower $p$-series of $N$ is constant and thus equal to
  $(\eta, \ldots, \eta)$, say.  Replacing $L$ by a finite-index
  sublattice, as in the proof of the theorem, we may assume that $L/N$
  is again a $\Z_p$-lattice that is equipped with the induced
  $G$-action.  Suppose that $e = \dim_{\Z_p}(L/N) \ge 1$.  By
  induction, the growth rate of a stratification of the lower
  $p$-series for the $\Z_pG$-module $L/N$ can be computed and equals
  $(\xi_1,\ldots,\xi_e)$, say.  The next step is to identify
  $\widehat{e} \in \{1, \ldots, e\}$ and
  $\vartheta = \xi_{\widehat{e}}$, as described in the proof of the
  theorem.  This requires us to take into account how $N$ lies
  within~$L$, as shown by the example below.  The growth rate for $L$
  is obtained by supplementing the growth rate for $L/N$, i.e.\
  $(\xi_1, \ldots, \xi_e)$, in the correct position by $d-e$ entries
  equal to $\eta$, if $\eta \le \vartheta$, or by $d-e$ entries equal
  to $\vartheta$, if $\vartheta < \eta$.

  \smallskip
  
  We give a small explicit example to illustrate possible pitfalls.
  Let $\mathfrak{o}$ be the valuation ring of the local field
  $\Q_p(\pi)$, where $\pi^4 = p$, so that
  $\mathfrak{o} = \Z_p + \Z_p \pi + \Z_p \pi^2 + \Z_p \pi^3$ is a
  $\Z_p$-lattice of rank~$4$.  We consider the $16$-dimensional
  $\Z_p$-lattice
  $L = \bigoplus_{i=1}^4 \mathfrak{o}.x_i \cong \mathfrak{o}^4$,
  equipped with the natural right-action of the pro-$p$ group
  $G \le \mathsf{GL}_4(\mathfrak{o})$ generated by the four
  elements
  \[
    \begin{pmatrix}
      \boldsymbol{1+\pi}&0&0&0 \\
      0&\boldsymbol{1}&0&0 \\
      0&0&\boldsymbol{1}&0 \\
      0&0&0&\boldsymbol{1}
    \end{pmatrix},
    \begin{pmatrix}
      \boldsymbol{1}&\boldsymbol{1}&0&0 \\
      0&\boldsymbol{1}&0&0 \\
      0&0&\boldsymbol{1}&0 \\
      0&0&0&\boldsymbol{1}
    \end{pmatrix},
    \begin{pmatrix}
      \boldsymbol{1}&0&0&0 \\
      0&\boldsymbol{1}&0&0 \\
      0&0&\boldsymbol{1+\pi^2}&0 \\
      0&0&0&\boldsymbol{1}
    \end{pmatrix},
    \begin{pmatrix}
      \boldsymbol{1}&0&0&0 \\
      0&\boldsymbol{1}&0&0 \\
      0&0&\boldsymbol{1}&\boldsymbol{1} \\
      0&0&0&\boldsymbol{1}
    \end{pmatrix}.
  \]
  It is not difficult to work out the lower $p$-series of the
  $\Z_pG$-module $L$ and to deduce that is has a stratification of
  growth rate
  \[
    ( \underbrace{1/4,\ldots,1/4}_{\text{8 entries}},
    \underbrace{1/2,\ldots,1/2}_{\text{8 entries}}).
  \]
  Moreover, the $\Z_pG$-submodules $N$ of $L$ such that
  $\Q_p \otimes_{\Z_p} N$ is simple are precisely the $1$-dimensional
  $\Z_p$-submodules of
  $\{ y \in L \mid \forall g \in G: y^g = y \} = \mathfrak{o} x_2 +
  \mathfrak{o} x_4$.  For any such a module $N$ we can arrange that
  $N$ is a direct summand of the $\Z_p$-lattice~$L$, simply by passing
  to the isolator of $N$ in~$L$.  Furthermore, $N$
  has growth rate $(\eta)=(1)$, and $L/N$ has one of the following two
  possible growth rates
  \[
    (\underbrace{1/4,\ldots,1/4}_{\text{8
        entries}},\underbrace{1/2,\ldots.1/2}_{\text{7 entries}}),
    \qquad (\underbrace{1/4,\ldots,1/4}_{\text{7
        entries}},\underbrace{1/2,\ldots.1/2}_{\text{8 entries}}).
  \]
  Depending on how $N$ lies within $L$, the relevant value of
  $\vartheta$ is $1/2$ or $1/4$.
\end{remark}

Next we use $p$-adic Lie theory to derive
Theorem~\ref{thm:p-series-of-p-adic-group}.

\begin{proof}[Proof of Theorem~\ref{thm:p-series-of-p-adic-group}]
  Being $p$-adic analytic, the pro\nobreakdash-$p$ group $G$ has
  finite rank.  By \cite[Prop.~3.9]{DDMS99}, there exists $j \in \N$
  such that $U = P_j(G)$ is uniformly powerful of rank
  $\mathrm{rk}(U) = \dim(G) = d$ and such that $P_i(G)$ is powerfully
  embedded in $U$ for all $i \in \N$ with $i \ge j$.  We recall that
  there is an explicit isomorphism of categories translating between
  uniformly powerful pro\nobreakdash-$p$ groups and powerful
  $\Z_p$-Lie lattices; see~\cite[Sec.~4 and~9]{DDMS99} and
  \cite{Kl05}.  This means that the underlying set of the
  pro\nobreakdash-$p$ group $U$ can be equipped in a canonical way
  with the structure of a $\Z_p$-Lie lattice $L$ carrying the same
  topology and incorporating essentially all information about~$U$.
  In particular, the construction is such that exponentiation in $U$
  corresponds to scalar multiplication in $L$ and that the conjugation
  action of $G$ on $U$ translates to a continuous $\Z_p$-linear action
  of $G$ on~$L$.  In order to distinguish between the two parallel
  points of view, it is convenient to write $\underline{x}$ for
  $x \in U$ when it features as an element of the Lie lattice~$L$
  rather than as a group element.  Using this notation, the earlier
  statements can be phrased as follows: the (identity) map $U \to L$,
  $x \mapsto \underline{x}$ is a homeomorphism; we have
  $\underline{x^a} = a \underline{x}$ for all $x \in U$, $a \in \Z_p$;
  and conjugation, encapsulated in the canonical homomorphism
  $G \to \Inn(G)$, induces a continuous homomorphism
  \[
    G \to \Aut_{\Z_p}(L), \quad g \mapsto \alpha_g,
    \qquad \text{where $\underline{x} \alpha_g = \underline{x^g}$
      for all $x \in U$, $g \in G$.}
  \]
  Furthermore, for every $i \in \N$ with $i \ge j$, the
  powerfully embedded subgroup $P_i(G) \trianglelefteq_\mathrm{o} U$
  gives, on the Lie lattice side, a powerfully embedded Lie sublattice
  $L_{i^*} \trianglelefteq_\mathrm{o} L$, where we write $i^* = i-j$
  to ease notationally a recurrent constant shift by~$j$ in the
  index; see \cite[Thm.~1.4]{Kl05}.

  Clearly, $j^* = 0$ gives $L_{j^*} = L = \lambda_{j^*}(L)$, and we
  contend that, in fact, $L_{i^*} = \lambda_{i^*}(L)$ for all
  $i \ge j$.  Let $i \in \N$ with $i > j$.  Since $P_{i-1}(G)$
  is powerful, its Frattini subgroup equals $P_{i-1}(G)^p$ and
  corresponds to the Lie sublattice~$pL_{i^*-1}$.  Multiplicative
  cosets of $P_{i-1}(G)^p$ in $P_{i-1}(G)$ are the same as additive
  cosets of $pL_{i^*-1}$ in $L_{i^*-1}$, and multiplication in
  $P_{i-1}(G)/P_{i-1}(G)^p$ is the same operation as addition in
  $L_{i^*-1}/pL_{i^*-1}$; see \cite[Cor.~4.15]{DDMS99}.  Since
  \[
    \nicefrac{\displaystyle P_{i-1}(G)}{\displaystyle P_i(G)} \cong
    \nicefrac{\left( \displaystyle \nicefrac{\displaystyle
          P_{i-1}(G)}{\displaystyle P_{i-1}(G)^p} \right)}{\left(
        \displaystyle \nicefrac{\displaystyle P_i(G)}{\displaystyle
          P_{i-1}(G)^p} \right)}
  \]
  is the largest $G$-central quotient of the multiplicative group
  $P_{i-1}(G)/P_{i-1}(G)^p$, the corresponding term
  \[
    \nicefrac{\displaystyle L_{i^*-1}}{\displaystyle L_{i^*}} \cong
    \nicefrac{ \left( \displaystyle \nicefrac{ \displaystyle
          L_{i^*-1}}{\displaystyle p L_{i^*-1}} \right)}{\left(
        \displaystyle \nicefrac{\displaystyle L_{i^*}}{\displaystyle
          pL_{i^*-1}} \right)}
  \]
  is the largest $G$-central quotient of the additive group
  $L_{i^*-1}/pL_{i^*-1}$.  By induction, $L_{i^*-1}$ equals
  $\lambda_{i^*-1}(L)$ and thus its largest $G$-central quotient of
  exponent $p$ is $L_{i^*-1} / \lambda_{i^*}(L)$.  This implies that
  $L_{i^*}$ is indeed equal to $\lambda_{i^*}(L)$.

  Theorem~\ref{thm:stratification-exists} supplies us with a
  stratification of the lower $p$-series $L_{i^*}$, $i \in \N$
  with $i \ge j$, based on a frame
  $(\underline{x_1}, \ldots, \underline{x_d})$ and with a growth rate
  $(\xi_1, \ldots, \xi_d)$, say, where
  $\nicefrac{1}{d} \le \xi_1\le \ldots \le \xi_d \le 1$ are rational
  numbers.  This means in particular that
  $\underline{x_1}, \ldots, \underline{x_d}$ form a
  $\Z_p$-basis for $L$ and that there exists
  $c \in \N_0$ such that, for all $i \ge j$,
  \[
    \check{M}_{i^*} \subseteq L_{i^*} \subseteq \hat{M}_{i^*}
  \]
  for
  \[
    \check{M}_{i^*} = \bigoplus_{k=1}^d 
    p^{\lfloor i^* \xi_k \rfloor + c} \Z_p \underline{x_k} %
    \quad \text{and} \quad %
    \hat{M}_{i^*} = \bigoplus_{k=1}^d  p^{\lfloor
      i^* \xi_k \rfloor - c} \Z_p \underline{x_k},
  \]
  where once more we embed $L$ into
  $\Q_p \otimes_{\Z_p} L$ to resolve unambiguously
  multiplication by negative powers of~$p$.  By
  Corollary~\ref{cor:stratif-by-G-inv-lattices}, the lattices $\check{M}_{i^*}$
  and $\hat{M}_{i^*}$ are $G$-invariant for all $i \ge j$.
  
  We consider $i \in \N$ with $i \ge \max\{j+2cd,6cd\}$.  We see
  that
  \[
    [P_{i-2cd}(G),P_{i-2cd}(G)] \subseteq P_{2i-4cd}(G) \subseteq
    P_{i+2cd}(G).
  \]
  In conjunction with \cite[Cor.~3.5]{FGJ08}, the description of the
  Lie bracket in terms of the group multiplication, as described in
  \cite[Sec.~4]{DDMS99} and recorded in \cite[Equ.~(2.1)]{Kl05} yields
  the corresponding result on the Lie lattice side,
  \[
    [L_{i^*-2cd}, L_{i^*-2cd}]_\mathrm{Lie} \subseteq L_{i^*+2cd},
  \]
  where we write $[\cdot,\cdot]_\mathrm{Lie}$ for the Lie bracket to
  distinguish it from group commutators.  Similar to the simpler
  situation discussed above, the finite abelian group section
  $P_{i-2cd}(G)/P_{i+2cd}(G)$ of $G$ corresponds to a finite abelian
  Lie ring section $L_{i^*-2cd}/L_{i^*+2cd}$ of the $\Z_p$-Lie lattice
  $L$: the multiplicative cosets are the same as the additive cosets,
  multiplication and addition are the same and the Lie bracket is
  trivial; compare \cite[Thm.~4.5]{Go07}.

  Indeed, we already observed that the Lie bracket on $L_{i^*-2cd}$ is
  trivial modulo $L_{i^*+2cd}$, and the Lie ring
  $L_{i^*-2cd}/L_{i^*+2cd}$ is abelian.  In conjunction with
  \cite[Lem.~B.2]{Kl05}, the description of group multiplication via
  the Hausdorff series~$\Phi$, namely
  \[
    \underline{xy} = \Phi(\underline{x},\underline{y}) = \underline{x}
    + \underline{y} + \tfrac{1}{2}
    [\underline{x},\underline{y}]_\mathrm{Lie} +
    \sum_{n=3}^\infty u_n(\underline{x},\underline{y}) \qquad
    \text{for $x,y \in U$}
  \]
  as described in \cite{Kl05}, yields that multiplicative cosets of
  $P_{i+2cd}(G)$ in $P_{i-2cd}(G)$ and additive cosets of
  $L_{i^*+2cd}$ in $L_{i^*-2cd}$ agree with one another and that
  multiplication modulo $P_{i+2cd}(G)$ is the same operation as
  addition modulo $L_{i^*+2cd}$. Here the details are more tricky to
  work out and we indicate how to proceed when $p \ge 3$; the case
  $p=2$ is very similar, but would need notational refinements as in
  \cite{DDMS99,Kl05} which we want to forgo.  If
  $x,y \in P_{i-2cd}(G)$, then on the Lie lattice side we obtain
  \[
    u_2(\underline{x},\underline{y}) = \tfrac{1}{2}
    [\underline{x},\underline{y}]_\mathrm{Lie} \in
    [L_{i^*-2cd},L_{i^*-2cd}]_\mathrm{Lie} \subseteq L_{i^*+2cd}
  \]
  and, for $n \ge 3$,
  \begin{multline*}
    u_n(\underline{x},\underline{y}) \in p^{- \lfloor (n-1)/(p-1)
      \rfloor} \big[ \!\!\cdot\!\cdot\!\cdot\!\! \big[
    [L_{i^*-2cd},L_{i^*-2cd}]_\mathrm{Lie}, \underbrace{L_{i^*-2cd}
      ]_\mathrm{Lie},
      \ldots, L_{i^*-2cd}}_{\text{$n-2$ terms}} \big]_\mathrm{Lie} \\
    \subseteq p^{- \lfloor (n-1)/(p-1) \rfloor} p^{n-2} L_{i^*+ 2cd}
    \subseteq L_{i^*+2cd}.
  \end{multline*}
  This shows that all $x,y \in P_{i-2cd}(G)$ satisfy
  \begin{equation} \label{equ:mult-equiv-add}
    \underline{xy} = \Phi(\underline{x},\underline{y}) \equiv
    \underline{x} + \underline{y} \quad \text{modulo $L_{i^*+2cd}$.}
  \end{equation}
  In particular, the elements $x$ and $y$ are multiplicative inverses
  of one another modulo $P_{i+2cd}(G)$ if and only if $\underline{x}$
  and $\underline{y}$ are additive inverses of one another modulo
  $L_{i^*+2cd}$.  By considering inverses of inverses, we conclude
  that multiplicative cosets of $P_{i+2cd}(G)$ are the same as
  additive cosets of $L_{i^*+2cd}$.  Now \eqref{equ:mult-equiv-add}
  shows that multiplication modulo $P_{i+2cd}(G)$ is the same
  operation as addition modulo $L_{i^*+2cd}$: we have established the
  isomorphism of abelian groups
  \[
    \nicefrac{\displaystyle P_{i-2cd}(G)}{\displaystyle P_{i+2cd}(G)}
    \cong \nicefrac{\displaystyle L_{i^*-2cd}}{\displaystyle L_{i^*+2cd}} \qquad
    \text{via} \quad x P_{i+2cd}(G) \mapsto \underline{x} + L_{i^*+2cd}.
  \]

  Since $G$ acts unipotently on the finite $p$-group
  $P_i(G) / P_i(G)^{p^{2c}}$ of order $p^{2cd}$, we conclude that
  $P_{i+2cd}(G) \subseteq P_i(G)^{p^{2c}}$ and likewise
  $P_i(G) \subseteq P_{i-2cd}(G)^{p^{2c}}$.  On the Lie lattice side
  these inclusions translate into
  \[
    L_{i^*+2cd} \subseteq p^{2c} L_{i^*} \subseteq \check{M}_{i^*}
    \subseteq L_{i^*} \subseteq \hat{M}_{i^*} \subseteq p^{-2c} L_{i^*}
    \subseteq L_{i^*-2cd}.
  \]
  Recalling that $\check{M}_{i^*}$ and $\hat{M}_{i^*}$ are
  $G$-invariant and satisfy $p^{2c} \hat{M}_{i^*} = \check{M}_{i^*}$,
  we conclude that they yield open normal subgroups
  $\check{U}_i, \hat{U}_i \trianglelefteq_\mathrm{o} G$ such that
  \[
    \check{U}_i \subseteq P_i(G) \subseteq \hat{U}_i
  \]
  and such that the abelian factor group
  $\hat{U}_i / \check{U}_i \cong \hat{M}_{i^*}/\check{M}_{i^*}$ is
  homocyclic of exponent~$p^{2c}$.

  It remains to factorise $\check{U}_i, \hat{U}_i$ into products of
  procyclic subgroups.  We consider $\check{U}_i$; the proof for
  $\hat{U}_i$ works in the same way.  Since
  $\check{U}_i = \check{M}_{i^*}$ as sets, we certainly have
  $x_k^{\, p^{\lfloor i ^* \xi_k \rfloor+c}} \in \check{U}_i$ for
  $1 \le k \le d$.  This shows that
  \[
    \check{V}_i = \overline{\langle x_1^{p^{\lfloor i^* \xi_1\rfloor+c}}\rangle}
    \,\cdots\, \overline{\langle x_d^{p^{\lfloor i^*
          \xi_d\rfloor+c}} \rangle} \subseteq_\mathrm{c} U
  \]
  is a subset of $\check{U}_i$.  Furthermore, \cite[Prop.~3.7 and
  Sec.~4.2]{DDMS99} show that $\check{V}_i$ is the disjoint union of
  $p^{\sum_{k=1}^d (\lfloor i^*\xi_d\rfloor - \lfloor i^*
    \xi_k\rfloor) }$ cosets of the open normal subgroup
  \[
    \overline{\langle x_1^{p^{\lfloor i^* \xi_d\rfloor+c}}\rangle}
    \,\cdots\, \overline{\langle x_d^{p^{\lfloor i^* \xi_d\rfloor+c}}
      \rangle} = U^{p^{\lfloor i^*\xi_d\rfloor+c}}
    \trianglelefteq_\mathrm{o} U
  \]
  of index
  $\lvert U : U^{p^{\lfloor i^*\xi_d\rfloor+c}} \rvert = p^{(\lfloor
    i^*\xi_d\rfloor +c ) d}$.  This yields
  \[
    \mu_U(\check{V}_i) = p^{\sum_{k=1}^d (\lfloor i^*\xi_d\rfloor -
      \lfloor i^* \xi_k\rfloor) } \big\lvert U : U^{p^{\lfloor
        i^*\xi_d\rfloor+c}} \big\rvert^{-1} = p^{-cd-\sum_{k=1}^d \lfloor
      i^*\xi_k\rfloor},
  \]
  where $\mu_U$ denotes the normalised Haar measure on the group~$U$.
  We recall that the normalised Haar measures $\mu_U$ and $\mu_L$ on
  the compact space $U = L$ are the same; see \cite[Prop.~A.2]{Kl05}
  and \cite[Lem.~1.9.4]{Bo07}.  This gives
  \[
    \mu_U(\check{U}_i)= \mu_L(\check{M}_{i^*}) = \lvert L :
    \check{M}_{i^*} \rvert^{-1} = p^{-c d - \sum_{k=1}^d \lfloor i^*
    \xi_k\rfloor} = \mu_U(\check{V}_i),
  \]
  and hence $\check{U}_i = \check{V}_i$ as wanted.
\end{proof}

It does not take much extra effort to prove
Corollaries~\ref{cor:index-of-p-series-term} and
\ref{cor:xi-unique-for-groups}.

\begin{proof}[Proof of Corollary~\ref{cor:index-of-p-series-term}]
  We continue to use the notation established in the proof of
  Theorem~\ref{thm:p-series-of-p-adic-group}.  Let $i \in \N$
  with $i \ge \max\{j+2cd,6cd\}$.  In the special case $c=0$
  everything is easier and the following argument, which implicitly
  uses $c \ge 1$, can be skipped.  Recall that
  $A_i = \hat{U}_i / \check{U}_i$ is a homocyclic group of rank $d$
  and exponent~$p^{2c}$; it has a unique homocyclic subgroup of rank
  $d$ and exponent~$p^c$, namely $B_i = A_i^{\, p^c}$, whose pre-image
  in $\hat{U}_i$ is
  \[
    G_i = \overline{\langle x_1^{\, p^{\lfloor (i-j) \xi_1\rfloor}},
      \ldots, x_d^{\, p^{\lfloor (i-j) \xi_d\rfloor}} \rangle}
    \trianglelefteq_\mathrm{o} U.
  \]
  The group $P_i(G)$ contains $\check{U}_i$ and maps, modulo
  $\check{U}_i$, onto a subgroup $C_i$ of~$A_i$.  Thus
  $C_i \cong \prod_{k=1}^d C_{p^{m_{i,k}}}$ for suitable parameters
  $0 \le m_{i,1} \le \ldots \le m_{i,d} \le 2c$, and
  \[
    \bigl\lvert \log_p \lvert A_i:C_i \rvert - \log_p \lvert A_i:B_i
    \rvert \bigr\rvert \le \sum_{k=1}^d \lvert m_{i,k} - c \rvert \le
    cd.
  \]
  This yields
  \[
    \bigl\lvert \log_p \lvert G:P_i(G) \rvert - \log_p \lvert G:G_i
    \rvert \bigr\rvert \le cd,
  \]
  and clearly
  \[
    \log_p \lvert G : G_i \rvert = \log_p \lvert G : P_j(G) \rvert +
    \sum_{k=1}^d \lfloor (i-j) \xi_k \rfloor.
  \]
  Thus it suffices to observe that, with the notation
  $\sigma = \sum_{k=1}^d \xi_k$,
  \[
    \lfloor (i-j) \sigma\rfloor -d \le \sum_{k=1}^d \lfloor
    (i-j) \xi_k \rfloor \le \lfloor (i-j) \sigma\rfloor. \qedhere
  \]
\end{proof}

\begin{proof}[Proof of Corollary~\ref{cor:xi-unique-for-groups}]
  The uniqueness of the parameters $\xi_1, \ldots, \xi_d$ follows by
  looking at the elementary divisors of increasing sections, similar
  to the proof of Lemma~\ref{lem:same-growth-rates}.  Indeed, suppose
  that $j$, $U$, $\xi_1, \ldots, \xi_d$, $c$ and $\check{U}_i$ satisfy
  the assertions of Theorem~\ref{thm:p-series-of-p-adic-group}. Let
  $L$ be the $\Z_p$-Lie lattice associated to~$U$.  By
  \cite[Prop.~3.9]{DDMS99}, there is $j' \in \N$ with $j' \ge j$ such
  that, for all $i \in \N$ with $i \ge j'$, the subgroups
  $P_i(G), \check{U}_i \trianglelefteq_\mathrm{o} G$ are uniformly
  powerful and thus give rise to full $\Z_p$-sublattices
  $A_i, \check{M}_i \subseteq_\mathrm{o} \Q_p
    \otimes_{\Z_p} L$, by the Lie correspondence.

    For $i_1, i_2 \in \N$ with $j' \le i_1 \le i_2$ the finite abelian
    $p$-group $A_{i_1}/A_{i_2}$ decomposes as
    \[
      A_{i_1}/A_{i_2} \cong \Z_p / p^{m_{i_1,i_2,1}}\Z_p \,\oplus
      \ldots \oplus\, \Z_p / p^{m_{i_1,i_2,d}}\Z_p
    \]
    with parameters $0 \le m_{i_1,i_2,1} \le \ldots \le m_{i_1,i_2,d}$
    reflecting the elementary divisors.  As in the proof of
    Lemma~\ref{lem:same-growth-rates}, we conclude that 
    \[
      \xi_k = \lim_{\substack{j' \le i_1 \le i_2 \\ \text{s.t.}\ i_2 - i_1 \to
          \infty}} \frac{m_{i_1,i_2,k}}{i_2-i_1}  \qquad \text{for $1
        \le k \le d$.} \qedhere
    \]
\end{proof}


\section{Hausdorff spectra with respect to the lower
  \texorpdfstring{$p$}{p}-series}\label{sec:hspec}

Our next task is to use the stratification results from Section~\ref{sec:stratifications} to pin down the Hausdorff dimensions of closed subgroups in $p$-adic analytic pro-$p$ groups  with respect to the lower $p$-series.

\begin{theorem} \label{thm:hspec-of-lattice}
  Let $L$ be a non-zero $\Z_p$-lattice of dimension
  $d = \dim_{\Z_p}(L)$.  Let $\mathcal{S}$ be a filtration
  series of $L$ which admits a stratification with growth rate
  $(\xi_1,\ldots,\xi_d)$.  Then the Hausdorff spectrum of the additive
  group $L$, with respect to $\mathcal{S}$, is
  \[
    \hspec^{\mathcal{S}}(L) = \left\{\frac{\varepsilon_1\xi_1 + \ldots
        + \varepsilon_d\xi_d}{\xi_1 + \ldots + \xi_d} \mid
      \varepsilon_1, \ldots, \varepsilon_d \in \{0,1\} \right\}.
  \]
  In particular, this shows that
  $\lvert \hspec^{\mathcal{S}}(L) \rvert \le 2^d$.

  Furthermore, every subgroup $H \subseteq_\mathrm{c} L$
  has strong Hausdorff dimension with respect to~$\mathcal{S}$.
\end{theorem}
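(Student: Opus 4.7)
The plan is to reduce to computing Hausdorff dimensions against the stratified reference series
\[
\mathcal{S}^* \colon L_i^* = \bigoplus_{k=1}^d p^{\lfloor i\xi_k\rfloor}\Z_p x_k,
\]
which is $c$-equivalent to $\mathcal{S}$ for some $c \in \N_0$. Since $c$-equivalent filtration series perturb both the numerator $\log_p|HL_i:L_i|$ and the denominator $\log_p|L:L_i|$ in the defining Hausdorff quotient only by a uniformly bounded additive constant, they induce the same Hausdorff dimension on every closed subgroup; hence I may work throughout with $\mathcal{S}^*$. Writing $\sigma = \sum_k \xi_k$, one immediately has $\log_p|L:L_i^*| = \sum_k \lfloor i\xi_k\rfloor = i\sigma + O(1)$.

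For the inclusion $\supseteq$ in the displayed spectrum, I would exhibit for each $\varepsilon = (\varepsilon_1, \ldots, \varepsilon_d) \in \{0,1\}^d$ the coordinate-aligned closed subgroup $H_\varepsilon = \bigoplus_{k\colon\varepsilon_k = 1}\Z_p x_k$. Because $H_\varepsilon \cap L_i^* = \bigoplus_{k\colon\varepsilon_k=1} p^{\lfloor i\xi_k\rfloor}\Z_p x_k$, a direct computation delivers
\[
\hdim_L^{\mathcal{S}^*}(H_\varepsilon) = \lim_{i\to\infty}\frac{\sum_{k\colon\varepsilon_k=1}\lfloor i\xi_k\rfloor}{\sum_k \lfloor i\xi_k\rfloor} = \frac{\sum_k \varepsilon_k \xi_k}{\sigma}
\]
as a proper limit, realising every value in the claimed finite set with strong Hausdorff dimension.

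For the converse inclusion, let $H \le_\mathrm{c} L$ be arbitrary. Closed subgroups of $\Z_p^d$ are automatically closed $\Z_p$-submodules, hence free $\Z_p$-lattices of some rank $r \le d$. I would align $H$ with the ascending flag $N_k = \bigoplus_{j \le k}\Z_p x_j$ of $L$ by setting $H_k = H \cap N_k$: each quotient $H_k/H_{k-1}$ embeds into $N_k/N_{k-1} \cong \Z_p$ as a submodule of the form $p^{a_k}\Z_p$ (possibly zero). Let $I = \{k : H_k \ne H_{k-1}\}$, so that $|I| = r$; choosing lifts provides a triangular $\Z_p$-basis $(y_k)_{k \in I}$ of $H$ with $y_k \equiv p^{a_k}x_k \pmod{N_{k-1}}$ for each $k \in I$. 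The crucial claim is the asymptotic index formula
\[
\log_p|HL_i^*:L_i^*| = \sum_{k\in I}\lfloor i\xi_k\rfloor + O_H(1) \qquad \text{for all sufficiently large $i$,}
\]
which, together with $\log_p|L:L_i^*| = i\sigma + O(1)$, forces $\hdim_L^{\mathcal{S}^*}(H) = (\sum_{k\in I}\xi_k)/\sigma$ as a genuine limit, matching the element of the claimed set associated with $\varepsilon_k = \mathbf{1}_I(k)$ and simultaneously confirming strong Hausdorff dimension.

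The main obstacle is establishing the boxed estimate. I would filter the image $\pi(H) = HL_i^*/L_i^*$ in $L/L_i^*$ by the subgroups $\pi(H_k)$ and invoke the modular law to identify
\[
\pi(H_k)/\pi(H_{k-1}) \;\cong\; H_k / (H_{k-1} + H_k \cap L_i^*).
\]
Within the cyclic $\Z_p$-module $H_k/H_{k-1}$, an element $b\,\bar y_k$ lies in the image of $H_k \cap L_i^*$ precisely when some $h \in H_{k-1}$ satisfies $by_k + h \in L_i^*$; working out this condition coordinate by coordinate reveals that for each $j < k$ the $x_j$-coefficient $bf_{j,k}$ must be cancellable by the $x_j$-projection of $H_{k-1}$ modulo $p^{\lfloor i\xi_j\rfloor}$. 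Contributions from lower strata with $\xi_j < \xi_k$ are absorbed automatically for large $i$ because $\lfloor i\xi_j\rfloor - \lfloor i\xi_k\rfloor \to -\infty$, whereas within the same stratum $\xi_j = \xi_k$ the condition tightens to a valuation inequality whose correction depends only on the fixed arithmetic of $H$ and of the chosen basis, not on $i$. Consequently $|\pi(H_k)/\pi(H_{k-1})| = p^{\lfloor i\xi_k\rfloor - a_k + M_k}$ for an $H$-dependent constant $M_k$, and summing across $k \in I$ yields the $O_H(1)$ estimate, completing the proof.
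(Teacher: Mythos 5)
Your approach is essentially the same as the paper's: both pass to the stratified reference series $\mathcal{S}^*$ (via the invariance of Hausdorff dimension under equivalence of filtrations), triangularize a $\Z_p$-basis of the closed subgroup $H$ with respect to the frame $(x_1,\ldots,x_d)$, identify the distinct pivot indices, and read off the Hausdorff dimension as $(\sum_{k\in I}\xi_k)/\sigma$ with a proper limit. The only real difference is the packaging of the index computation: the paper inducts on $\mathrm{rk}(H)$, splitting off the top generator $H_2=\Z_p y_r$ and controlling the single term $\log_p \lvert (H_1+L_i)\cap(H_2+L_i) : L_i \rvert$ by a one-coordinate valuation bound, whereas you filter $\pi(H)$ from below along the coordinate flag and estimate each increment $\pi(H_k)/\pi(H_{k-1})$; these are two views of the same bookkeeping and neither buys more generality. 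One remark on your increment paragraph: you do not actually need the cancellation by $x_j$-projections of $H_{k-1}$ that you gesture at. Taking $h=0$ already gives, for large $i$, the two-sided bound
\[
  \lfloor i\xi_k\rfloor - a_k \;\le\; \log_p\lvert \pi(H_k)/\pi(H_{k-1}) \rvert \;\le\; \lfloor i\xi_k\rfloor - m_k,
\]
where $m_k \le a_k$ is the minimum valuation of the coefficients of $y_k$ supported in the top stratum $\{\, j\le k : \xi_j=\xi_k \,\}$; the lower bound is read off from the $x_k$-coordinate, and for the upper bound the coefficients in strata $\xi_j<\xi_k$ become irrelevant once $\lfloor i\xi_k\rfloor-\lfloor i\xi_j\rfloor$ is large. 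This $O_H(1)$ window is all that is required for both the value of $\hdim_L^{\mathcal{S}}(H)$ and for strong Hausdorff dimension; the stronger assertion that the increment equals $\lfloor i\xi_k\rfloor - a_k + M_k$ exactly would need the $H_{k-1}$-cancellation machinery, but it is not needed.
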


\begin{proof}
  We write
 $\mathcal{S} \colon L = L_0 \supseteq L_1 \supseteq
    \ldots$ and choose a frame $(x_1, \ldots, x_d)$ for a
  stratification of~$\mathcal{S}$.  We also recall that
  $0 < \xi_1 \le \xi_2 \le \ldots \le \xi_d$.  By
  \cite[Lem.~2.2]{KlThZu19}, any two equivalent filtration series
  define the same Hausdorff dimension on closed subgroups of~$L$.
  Hence we may suppose that
  \[
    L_i = \bigoplus_{k=1}^d p^{\lfloor i \xi_k \rfloor}
    \Z_p \, x_k.
  \]
  
  Let $H \subseteq_\mathrm{c} L$ be a $\Z_p$-sublattice of
  dimension~$\dim_{\Z_p}(H) = r$, say.  Since $\Z_p$ is a principal
  ideal domain, $H$ has a $\Z_p$-basis $(y_1, \ldots, y_r)$ such
  that the parameters
  \[
    j(s) = \max \big( \{0\} \cup \{ k \mid 1\le k \le d \text{ and }
    a_{s,k} \neq 0 \} \big), \quad 1 \le s \le r,
  \]
  where we write $y_s = \sum_{k=1}^d a_{s,k} x_k$ with suitable
  coefficients $a_{s,k} \in \Z_p$, satisfy
  $j(1) < \ldots < j(r)$.  We contend that $H$ has strong Hausdorff
  dimension
  \[
    \hdim_L^{\mathcal{S}}(H) = \frac{\xi_{j(1)} + \ldots +
      \xi_{j(r)}}{\xi_1 + \ldots + \xi_d}.
  \]

  If $r=0$, the lattice $H$ equals $\{0\}$ and indeed
  $\hdim_L^\mathcal{S}(H) = 0$.  Now suppose that $r \ge 1$ and
  proceed by induction.  We write $H = H_1 \oplus H_2$, where
  $H_1 = \bigoplus_{s=1}^{r-1} \Z_p y_s$ and
  $H_2 = \Z_p y_r$.  By induction, we have
  \begin{equation}\label{equ:H1-limit}
    \lim_{i \to \infty} \frac{\log_p \lvert H_1 + L_i : L_i
      \rvert}{\log_p \lvert L :
      L_i \rvert} = \hdim_L^{\mathcal{S}}(H_1) = \frac{\xi_{j(1)} + \ldots +
      \xi_{j(r-1)}}{\xi_{1} + \ldots + \xi_{d}}.
  \end{equation}
  Now consider $H_2 = \Z_p y_r$ and recall that
  $y_r = \sum_{k=1}^{j(r)} a_{r,k} x_k$ with $a_{r,j(r)} \ne 0$.  Let
  $v_p \colon \Z_p \to \N_0 \cup \{\infty\}$ denote
  the $p$-adic valuation map, and put
  \[
    m = \min \{ v_p(a_{r,k}) \mid 1 \le k \le j(r) \text{ with } \xi_k
    = \xi_{j(r)} \} \in \N_0.
  \]
 For each $i \in \N$ the order of the finite cyclic
    group $(H_2 + L_i)/ L_i$ equals the order of $y_r$ modulo
    $L_i$.  Thus we see that, for all sufficiently large $i \in \N$,
  \[
    \log_p \lvert H_2 + L_i : L_i \rvert  = \max \{
      \lfloor i \xi_k \rfloor - v_p(a_{r,k}) \mid 1 \le k \le j(r) \} =
     \lfloor i \xi_{j(r)} \rfloor - m
  \]
  and thus
  \begin{equation}\label{equ:H2-limit}
    \lim_{i \to \infty}\frac{\log_p \lvert H_2 + L_i : L_i
      \rvert}{\log_p \lvert L :
      L_i \rvert} = \lim_{i \to \infty} \frac{\lfloor i\xi_{j(r)}\rfloor -
      m}{\lfloor i \xi_{1} \rfloor + \ldots + \lfloor i
      \xi_{d}\rfloor} = \frac{\xi_{j(r)}}{\xi_{1} + \ldots + \xi_{d}}.
  \end{equation}
  
  For each $i\in \N_0$, we see that
  \begin{multline*}
    \log_p \lvert H + L_i : L_i \rvert +\log_p \lvert (H_1 + L_i) \cap
    (H_2
    + L_i) : L_i \rvert \\
    = \log_p \lvert H_1 + L_i:L_i \rvert + \log_p \lvert H_2 + L_i:L_i
    \rvert,
  \end{multline*}
  and combining this observation with \eqref{equ:H1-limit} and
  \eqref{equ:H2-limit} we arrive at
  \[
    \frac{\xi_{j(1)}+\ldots+\xi_{j(r)}}{\xi_1+\ldots+\xi_d} -
    \hdim_L^\mathcal{S}(H) = \varlimsup_{i\rightarrow\infty}
    \frac{\log_p \lvert (H_1 + L_i) \cap (H_2 + L_i):L_i
      \rvert}{\log_p \lvert L : L_i \rvert}.
  \]
  It suffices to prove that the upper limit on the right-hand side
  equals $0$; in particular, this also shows that it is a proper
  limit.  Since $H_2 = \Z_p y_r$, we find that for each
  $i \in \N$ there exists $n_i \in \N_0$ such that
  \[
    (H_1 + L_i ) \cap (H_2 + L_i ) = p^{n_i} \Z_p \, y_r + L_i.
  \]
  Moreover,
  \[
    p^{n_i} y_r \in H_1 + L_i \subseteq \bigoplus_{k=1}^{j(r)-1}
    \Z_p x_k \;\oplus\; \bigoplus_{k=j(r)}^d p^{\lfloor i \xi_k
      \rfloor} \Z_p x_k
  \]
  shows that $n_i + v_p(a_{r,j(r)}) \ge \lfloor i\xi_{j(r)}\rfloor$
  for all $i \in \N_0$.  This implies that indeed
  \[
  \begin{split}
    \varlimsup_{i \to \infty} \frac{\log_p \lvert (H_1 + L_i) \cap
      (H_2 + L_i):L_i \rvert}{\log_p \lvert L : L_i \rvert} =
    \varlimsup_{i \to \infty} \frac{\log_p \lvert p^{n_i} \Z_p\, y_r
      + L_i:L_i\rvert}{\log_p \lvert L:L_i \rvert} \\
    \le \varlimsup_{i \to \infty} \frac{\lfloor i \xi_{j(r)} \rfloor -
      m - n_i}{\log_p \lvert L : L_i \rvert} \le \varlimsup_{i \to
      \infty} \frac{v_p(a_{r,j(r)}) - m}{\log_p \lvert L : L_i \rvert}
    = 0. \qedhere
    \end{split}
  \]
\end{proof}

We are now in position to prove Theorem~\ref{thm:spectrum-p-adic}.

\begin{proof}[Proof of Theorem \ref{thm:spectrum-p-adic}]
  As in the proof of Theorem~\ref{thm:p-series-of-p-adic-group} we
  find $j \in \N$ such that the subgroup
    $U = P_j(G) \subseteq_\mathrm{o} G$ is uniformly powerful and
  forms the starting point for the analysis carried out there.  In
  particular, there are matching filtration series
  \[
    \mathcal{S}_U \colon U_{i^*} = P_i(G), \; i \in \N_{\ge
      j}, \quad \text{and} \quad \mathcal{S}_L \colon L_{i^*} =
    \lambda_{i^*}(L), \; i \in \N_{\ge j},
  \]
  of the group $U$ and of the powerful $\Z_p$-Lie lattice $L$
  associated to~$U$, where we write $i^* = i-j$ as before.  

  Hausdorff dimensions of subgroups in $G$ with respect to the lower
  $p$-series $\mathcal{L}$ are closely linked to Hausdorff dimensions
  of subgroups in $U$ with respect to~$\mathcal{S}_U$.  Indeed, for
  every subgroup $H \subseteq_\mathrm{c} G$ and every
  open subgroup $W \subseteq_\mathrm{o} H$ with
  $W \subseteq U$,
  \[
    \hdim_G^\mathcal{L}(H) = \hdim_U^{\mathcal{S}_U}(W),
  \]
  and $H$ has strong Hausdorff dimension in $G$ if and only if $W$
  does in~$U$.

  Moreover, we claim that, if $W$ is chosen so that it is powerful
  (using \cite[Prop.~3.9]{DDMS99}) with corresponding Lie sublattice
 $M \subseteq_\mathrm{o} L$, then
  \begin{equation} \label{equ:hdim-W-M}
    \hdim_U^{\mathcal{S}_U}(W) = \hdim_L^{\mathcal{S}_L}(M),
  \end{equation}
  and again one of them is strong if and only if the other one is.
  The proof then concludes by the application of
  Theorems~\ref{thm:stratification-exists} and
  \ref{thm:hspec-of-lattice}.

  To see \eqref{equ:hdim-W-M}, let $i \in \N$ with $i \ge j$ and
  choose $m \in \N$ such that
  $U^{p^m} \subseteq U_{i^*}$, and accordingly
  $p^m L \subseteq L_{i^*}$.  By
  \cite[Cor.~4.15]{DDMS99}, multiplicative cosets of $U^{p^m}$ in~$U$
  correspond to additive cosets of $p^mL$ in~$L$.  Consequently,
  $U_{i^*}$ is the union of as many $U^{p^m}$-cosets as $L_{i^*}$ is
  the union of $p^mL$-cosets, hence
  \[
    \lvert U_{i^*} : U^{p^m} \rvert = \lvert L_{i^*} : p^m L \rvert.
  \]
  Since $W$ is powerful and $U_{i^*}$ is powerfully embedded in~$U$,
  the group $W U_{i^*}$ is powerful and corresponds to the Lie lattice
  $M+L_{i^*}$.  Again we obtain
  \[
    \lvert WU_{i^*} : U^{p^m} \rvert = \lvert M+L_{i^*} : p^m L
    \rvert.
  \]
  In combination this gives
  \begin{multline*}
    \frac{\log_p \lvert WU_{i^*} : U_{i^*} \rvert}{\log_p \lvert U :
      U_{i^*} \rvert} = \frac{\log_p \lvert WU_{i^*} : U^{p^m} \rvert
      - \log_p \lvert U_{i^*} : U^{p^m} \rvert}{\log_p \lvert U :
      U^{p^m} \rvert - \log_p \lvert U_{i^*} :
      U^{p^m} \rvert} \\
    = \frac{\log_p \lvert M + L_{i^*} : p^m L \rvert - \log_p \lvert
      L_{i^*} : p^m L \rvert}{\log_p \lvert L : p^mL \rvert - \log_p
      \lvert L_{i^*} : p^mL \rvert} = \frac{\log_p \lvert M + L_{i^*}
      : L_{i^*} \rvert}{\log_p \lvert L : L_{i^*} \rvert},
  \end{multline*}
  which shows that the Hausdorff dimension of $W$ in $U$ and the one
  of $M$ in $L$ are given by the lower limit of the same sequence.
\end{proof}

As indicated in the introduction, it is natural to look for sharp
bounds for the size of the Hausdorff spectrum $\hspec^\mathcal{L}(G)$
of a $p$-adic analytic group $G$, with respect to its lower
$p$-series, in terms of the dimension $\dim(G)$.   

\begin{example}\label{ex:max-value}
  For $m\in\N$, let $q_1, q_2, \ldots, q_m$ denote the first $m$
  consecutive primes, starting at $q_1=2$.  We consider the metabelian
  pro\nobreakdash-$p$ groups
  \[
    G_m = C \ltimes (A_1 \times \ldots \times A_m),
  \]
  where $C = \langle y \rangle \cong \Z_p$ is infinite procyclic and,
  for each $j \in\{1, \ldots, m\}$, the group
  $A_j= \langle x_{j,1}, \ldots, x_{j,q_j}\rangle \cong\Z_p^{\, q_j}$
  is torsion-free abelian of rank $q_j$ and normal in $G_m$, the
  action of $C$ on $A_j$ being determined by
  \[
    [x_{j,k}, y] =
    \begin{cases}
      x_{j,k+1} & \text{if $1 \le k \le q_j-1$,} \\
      x_{j,1}^{\, p} & \text{if $k = q_j$.}
    \end{cases}
  \]
  Clearly, the groups $G_m$ are $p$-adic analytic.  Below we determine
  asymptotically the size of their Hausdorff spectra, with respect to
  the lower $p$-series.  In particular, the estimate yields
  \begin{equation} \label{equ:Gm-asymp} \frac{\log_p \bigl\lvert
      \hspec^\mathcal{L}(G_m) \bigr\rvert}{\sqrt{\dim(G_m)}} \to \infty
    \qquad \text{as $m \to \infty$.}
  \end{equation}
  
  We fix $m \in \N$ and set $G = G_m$.  It is straightforward
  to check that $G$ has dimension
  \[
    \dim(G) = 1 + \sum_{j=1}^m q_j.
  \]
  Moreover, its lower $p$-series admits a stratification with growth
  rate
  \[
    (1,\, \nicefrac{1}{2},\nicefrac{1}{2}, \,
    \nicefrac{1}{3},\nicefrac{1}{3},\nicefrac{1}{3}, \, \ldots, \,
    \underbrace{\nicefrac{1}{q_m}, \ldots,
      \nicefrac{1}{q_m}}_{\text{$q_m$ entries}}).
  \]
  Theorem~\ref{thm:spectrum-p-adic} readily provides an upper bound
  for the size of the Hausdorff spectrum, namely
  \[
    \bigl\lvert \hspec^\mathcal{L}(G) \bigr\rvert \le 2 \prod_{j=1}^m (q_j+1).
  \]

  We arrive at a lower bound as follows.  For every
    subgroup $H \subseteq_\mathrm{c} G$ of
  the form $H = \prod_{j=1}^m B_j$ with
  $B_j \subseteq A_j$ and $\dim(B_j) = d_j$, say, a
  direct analysis, similar to the one in the proof of
  Theorem~\ref{thm:hspec-of-lattice}, yields
  \[
    \hdim^{\mathcal{L}}_G(H) = \frac{\nicefrac{d_1}{q_1} +
      \nicefrac{d_2}{q_2} + \ldots + \nicefrac{d_m}{q_m}}{m+1}.
  \]
  We observe that, upon restricting to $0 \le d_j< q_j$ for
  $j \in \{1,\ldots,m\}$, the formula yields
  \[
    \prod_{j=1}^m q_j
  \]
  distinct values in the Hausdorff spectrum of $G$ with respect to
  $\mathcal{L}$.

  The Prime Number Theorem provides the asymptotic equivalences
  \[
    \log \Big( 2 \prod_{j=1}^m (q_j+1) \Big) \sim \log \Big(
    \prod_{j=1}^m q_j \Big) \sim q_m \quad \text{and} \quad
    \sum_{j=1}^m q_j \sim \frac{q_m^{\, 2}}{2 \log q_m},
  \]
  as $m$ tends to infinity; compare with \cite[Cor.~1]{Sz80}
  (and \cite{Ax19} for a more careful approximation).
  These estimates describe the size of the Hausdorff spectrum
  asymptotically and imply \eqref{equ:Gm-asymp}.
\end{example}

  

\end{document}